\newcommand{\cov}{\mathrm{Cov}}
\newcommand{\var}{\mathrm{Var}}
\newcommand{\tr}{\mathrm{Tr}}
\newcommand{\RR}{\mathbb{R}}
\newcommand{\EE}{\mathbb{E}}
\newcommand{\PP}{\mathbb{P}}
\newcommand{\eps}{\varepsilon}
\newcommand{\vphi}{\varphi}
\newcommand{\id}{\mathrm{Id}}
\newtheorem{theorem}{Theorem}[section]
\newtheorem{lemma}[theorem]{Lemma}
\newtheorem{proposition}[theorem]{Proposition}
\newtheorem{corollary}[theorem]{Corollary}
\theoremstyle{remark}
\newtheorem{remark}[theorem]{Remark}
\begin{document}

\title{Bourgain's slicing problem and KLS isoperimetry up to polylog}

\author{
Bo'az Klartag\footnote{
Department of Mathematics,
Weizmann Institute of Science,
Rehovot 76100, Israel
}\; and Joseph Lehec\footnote{
CEREMADE (UMR CNRS 7534),
Universit\'e Paris-Dauphine, 75016 Paris, France}
}
\date{}
\maketitle

\abstract{We prove that Bourgain's hyperplane conjecture
and the Kannan-Lov\'asz-Simonovits (KLS) isoperimetric conjecture
hold true up to a factor that is polylogarithmic in the dimension.}
\section{Introduction}
One of the central questions in high-dimensional convex geometry is Bourgain's slicing problem.
In its simplest formulation, it asks whether for any convex body $K \subseteq \RR^n$ of volume one, there exists a hyperplane $H \subseteq \RR^n$ such that
$$
{\rm Vol}_{n-1}(K \cap H) > c,
$$
where $c > 0$ is a universal constant,
and ${\rm Vol}_{n-1}$ stands for $(n-1)$-dimensional volume. The question originated in Bourgain's work \cite{bou2, bou1}, and we refer the reader to \cite{KM} and references therein for background on this problem, its implications for convexity and its equivalent formulations. For $n \geq 1$ define
$$ \frac{1}{L_n} := \inf_{K \subseteq \RR^n} \sup_{H \subseteq \RR^n} {\rm Vol}_{n-1}(K \cap H), $$
where the infimum runs over all convex bodies $K \subseteq \RR^n$ of volume one, and the supremum  runs over all hyperplanes $H \subseteq \RR^n$. Thus, a convex body of volume one in $\RR^n$ has a hyperplane section whose $(n-1)$-dimensional volume is at least $1 / L_n$. For decades, the best estimate for $L_n$
has been the bound $L_n \leq C n^{1/4}$ where $C > 0$ is a universal constant, as proven in Bourgain \cite{bou3, bou4} (up to a logarithmic factor) and in \cite{K_quarter}. A recent breakthrough by Chen \cite{chen} has led to the bound $L_n \leq C_{\eps} n^{\eps}$ for any $\eps > 0$, or more precisely
\begin{equation}  L_n \leq C_1 \exp  \left(C_2 \sqrt{\log n} \cdot \sqrt{\log \log (3n) } \right), \label{eq_415}
\end{equation}
where $C_1, C_2 > 0$ are universal constants. Chen arrives at (\ref{eq_415}) by exploiting the relation between the slicing problem and the  thin shell problem, due to Eldan and Klartag \cite{EK}. It is proven in \cite{EK} that
\begin{equation}
L_n \leq C \sigma_n, \label{eq_435}
\end{equation}
where $C > 0$ is a universal constant, and where $\sigma_n$ is the thin-shell constant which we will describe shortly.
A probability density $\rho: \RR^n \rightarrow [0, \infty)$ is {\it log-concave} if the set $\{ \rho > 0 \} = \{ x \in \RR^n \, ; \, \rho(x) > 0 \}$ is convex, and $\log \rho$ is concave in $\{ \rho > 0 \}$. A probability measure in $\RR^n$ (or a random vector in $\RR^n$) is log-concave if it is supported
in an affine subspace of $\RR^n$ and it has a log-concave density in this subspace. For instance, the uniform probability measure on any compact, convex set is log-concave, as well as all Gaussian measures.  We say that a probability measure $\mu$ on $\RR^n$ with finite second moments is \emph{isotropic} if
\begin{equation}  \int_{\RR^n} x_i \, d \mu(x) = 0 \quad \text{and} \quad
\int_{\RR^n} x_i x_j \, d \mu(x) = \delta_{ij} \qquad  (i,j=1,\ldots,n), \label{eq_428} \end{equation}
where $\delta_{ij}$ is Kronecker's delta. Thus $\mu$ is isotropic when it is centered and its covariance matrix is the identity matrix. A log-concave probability measure has finite moments of all orders  (e.g. \cite[Lemma 2.2.1]{BGVV}).
The convolution of two log-concave
probability measures is again log-concave, as follows from the Pr\'ekopa-Leindler inequality \cite[Theorem 1.2.3]{BGVV} or from the earlier work
by Davidovi\v{c}, Korenbljum and Hacet \cite{DKH}. The relevance of the class of log-concave distributions
to the slicing problem was realized by Ball \cite{ball_studia}.
The thin-shell constant $\sigma_{\mu} > 0$ of an isotropic, log-concave probability measure $\mu$ in $\RR^n$ is defined via
\begin{equation}  n \sigma_{\mu}^2 =  \var_{\mu} (|x|^2), \label{eq_430} \end{equation}
where $\var_{\mu}(f) = \int f^2 d \mu - \left( \int f d \mu \right)^2$. It may be shown that most of the mass of
the measure $\mu$ is located in a  spherical shell whose width is at most $C \sigma_{\mu}$, and this estimate for the width is always tight, hence the name {\it thin-shell constant}, see
Anttila, Ball and Perissinaki \cite{abp} and Bobkov and Koldobsky \cite{BK1}. The thin-shell
constant  is
crucial for establishing the Central Limit Theorem for Convex Sets \cite{K_invent, ptrf},
as put forth in \cite{abp} following Sudakov \cite{sudakov} and Diaconis and Freedman \cite{DF}.
The parameter $\sigma_n$ mentioned above is defined as
$$ \sigma_n = \sup_{\mu } \sigma_{\mu} $$
where the supremum runs over all isotropic, log-concave probability measures $\mu$ in $\RR^n$. Earlier
bounds for $\sigma_n$ utilized the Concentration of Measure Phenomenon, see \cite{K_poly}, Fleury \cite{fleury} and Gu\'edon and Milman
\cite{GM}, following Paouris' large deviation principle \cite{paouris}.
More recent advances, due to Eldan \cite{Eldan1}, Lee and Vempala \cite{LV} and Chen \cite{chen}, deal with the Poincar\'e
constant. The Poincar\'e constant  $C_P(\mu)$ of a Borel probability measure $\mu$ in $\RR^n$ is defined as the smallest constant
$C \geq 0$ such that for any locally-Lipschitz function $f \in L^2(\mu)$,
\begin{equation}  \var_{\mu}(f) \leq C \cdot \int_{\RR^n} |\nabla f|^2 d \mu. \label{poincare} \end{equation}
The fact that $\sigma^2_{\mu} \leq 4 C_P(\mu)$ when $\mu$ is isotropic is easily proven:
\begin{equation} n \sigma_{\mu}^2 = \var_{\mu}(|x|^2)
\leq C_P(\mu)  \int_{\RR^n} |2x|^2 d \mu(x) = 4 n \cdot C_P(\mu).
\label{eq_1052} \end{equation}
The Poincar\'e constant is closely related to the {\it isoperimetric constant} or the {\it Cheeger constant} of $\mu$.
Given a probability measure $\mu$ in $\RR^n$ with log-concave density $\rho$, its isoperimetric constant is
$$ \frac{1}{\psi_{\mu}} = \inf_{A \subseteq \RR^n} \left\{ \frac{\int_{\partial A} \rho}{\min \{ \mu(A), 1 - \mu(A) \}} \right\} $$
where the infimum runs over all open sets $A \subseteq \RR^n$ with smooth boundary
for which $0 < \mu(A) < 1$.
By the Cheeger inequality \cite{cheeger} and the Buser-Ledoux inequality \cite{buser, ledoux},
for any absolutely-continuous, log-concave probability measure $\mu$ in $\RR^n$,
\begin{equation}
\frac{1}{4} \leq \frac{\psi_{\mu}^2}{C_P(\mu)} \leq 9,
\label{eq_536}
\end{equation}
where the inequality on the left, Cheeger's inequality, is rather general and does not require log-concavity. Define
\begin{equation}
\psi_n := \sup_{\mu} \psi_{\mu} \label{eq_1210} \end{equation}
where the supremum runs over all isotropic,
log-concave probability measures $\mu$ in $\RR^n$.
Our convention follows
Lee and Vempala \cite{LV}; the quantity
we denote by $\psi_n$ is denoted by $G_n$ in Eldan \cite{Eldan1} and by $\psi_n^{-1}$ in Chen \cite{chen}. The Kannan-Lov\'asz-Simonovits (KLS) conjecture \cite{KLS} suggests that $\psi_n$ is bounded by a universal constant.
Thanks to
(\ref{eq_435}), (\ref{eq_1052}) and (\ref{eq_536}) we have
the chain of inequalities
\begin{equation} L_n \leq C \sigma_n \leq \tilde{C} \psi_n, \label{eq_550} \end{equation}
where $C, \tilde{C} > 0$ are universal constants.
The right-hand side inequality in (\ref{eq_550}) may be reversed, up to a logarithmic factor.
A deep theorem by Eldan \cite{Eldan1} indeed states that
\begin{equation}
\psi_n \leq \tilde{C} \sigma_n \cdot \log n.
\label{eq_1832}
\end{equation}
In \cite{chen}, Chen uses {\it Eldan's stochastic localization}
\cite{Eldan1}  and the analysis of Lee and Vempala
\cite{LV} in order to show that
\begin{equation} \psi_n \leq C_1 \exp  \left(C_2 \sqrt{\log n} \cdot \sqrt{\log \log (3n) } \right), \label{eq_546} \end{equation}
where $C_1, C_2 > 0$ are universal constants. This bound implies (\ref{eq_415}), in view of (\ref{eq_550}).
The bound in (\ref{eq_546}) grows slower than any power law, and it is natural to expect that this bound for $\psi_n$ may be improved to a polylogarithmic one. This is indeed true, as we show in this paper:
\begin{theorem} For any $n \geq 2$,
$$ \psi_n \leq C (\log n)^{\alpha} $$
for some universal constants $C, \alpha > 0$.
Our proof yields  $\alpha = 5 $.
\label{thm_2226}
\end{theorem}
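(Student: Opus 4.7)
The plan is to refine Eldan's stochastic localization analysis by controlling the growth of the covariance operator of the tilted measure via the slicing constant $L_n$ rather than the Poincar\'e constant $\psi_n$ (as Chen's argument does), and to close a bootstrap using the three-way chain $L_n \lesssim \sigma_n \lesssim \psi_n \lesssim \sigma_n \log n$ provided by (\ref{eq_435}), (\ref{eq_1052}) and (\ref{eq_1832}). Fix an isotropic log-concave $\mu$ on $\RR^n$ with density $\rho$ and let $(\mu_t)_{t \geq 0}$ be its stochastic localization: the density of $\mu_t$ is proportional to $\exp\bigl(\langle \theta_t,x\rangle - t|x|^2/2\bigr)\rho(x)$ with $\theta_t$ chosen so that $t\mapsto \mu_t$ is a measure-valued martingale. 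Write $A_t,\ a_t$ for the covariance matrix and barycenter of $\mu_t$. It\^o calculus yields $dA_t = dM_t - A_t^2\,dt$, where the martingale $M_t$ is built from the third moment tensor $\int (x-a_t)^{\otimes 3}\,d\mu_t$. By the Lee--Vempala criterion, if $\|A_t\|_{\rm op}\leq 2$ on a random interval $[0,T]$ of length $T\gtrsim (\log n)^{-2\alpha}$, then $\psi_\mu\lesssim (\log n)^\alpha$, so the task is purely to control the top eigenvalue of $A_t$.

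The core technical step is the directional third-moment estimate
$$
\EE_{\mu_t}\langle v,x-a_t\rangle^3 \;\leq\; C\cdot L_{\mu_t}\cdot \langle A_t v,v\rangle^{3/2},
\qquad v\in S^{n-1},
$$
where $L_{\mu_t}$ is the isotropic constant of the (non-isotropic, log-concave) measure $\mu_t$; this follows from Ball's bound for the third moment of a one-dimensional marginal of a log-concave density, together with the linear invariance of $L_n$ applied to the change of variables $x\mapsto A_t^{-1/2}(x-a_t)$. Feeding this into It\^o's formula for $\lambda_{\max}(A_t)$, and handling the supremum over $v$ by a chaining/net argument on $S^{n-1}$ (costing a factor of $\log n$), gives an ODE of the shape
$$
\frac{d}{dt}\|A_t\|_{\rm op} \;\lesssim\; L_n^{2}\cdot\|A_t\|_{\rm op}^{2}\cdot (\log n)^{O(1)},
$$
which is valid until $\|A_t\|_{\rm op}$ first doubles. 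Integrating yields $T\gtrsim L_n^{-2}/(\log n)^{O(1)}$, and hence $\psi_n \leq C\,L_n\,(\log n)^{\beta}$ for a universal $\beta$.

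The bootstrap then closes as follows. By (\ref{eq_435}) and (\ref{eq_1052}) we have $L_n \lesssim \sigma_n \lesssim \psi_n$, which gives only the tautology $\psi_n \lesssim \psi_n(\log n)^\beta$. To get a genuine gain, one applies the stochastic localization estimate not to $\mu$ itself but to an auxiliary measure obtained from $\mu$ by a first round of tilting/conditioning, so that the quantity appearing on the right is $\sqrt{\psi_n}$ rather than $\psi_n$; this produces the self-improving recursion $\psi_n \leq C\,\sqrt{\psi_n}\,(\log n)^{\beta'}$, which solves to $\psi_n \leq C(\log n)^{2\beta'}$, with the book-keeping giving $\alpha=5$. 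The main obstacle will be the third-moment estimate under tilting: the tilted $\mu_t$ is \emph{not} isotropic, so transporting the slicing bound from $\mu$ to $\mu_t$ requires a delicate use of the linear invariance of $L_n$ together with Borell-type tail bounds to ensure the chaining argument does not leak superpolylogarithmic factors.
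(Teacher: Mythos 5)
There are two genuine gaps, and together they mean the argument does not close.

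First, the covariance-growth step. The It\^o drift and quadratic variation of $\lambda_{\max}(A_t)$ (or of any smooth surrogate for it) are governed not by the directional third moment $\EE_{\mu_t}\langle v,x-a_t\rangle^3$ but by the Hilbert--Schmidt norm of the matrix $\int \langle v,x-a_t\rangle\, (x-a_t)\otimes(x-a_t)\, d\mu_t$, i.e.\ by the quantity $\kappa_n$ of \eqref{eq_kappan}: in the eigenbasis of $A_t$ the relevant term is $\sum_j |u_{ij}|^2$, not $|u_{ii}|^2$ alone. Your one-dimensional marginal bound (which in fact holds with a universal constant by reverse H\"older for log-concave measures --- no isotropic constant $L_{\mu_t}$ enters) controls only the diagonal entries and is far too weak. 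Moreover, handling the supremum over $v\in S^{n-1}$ by a net costs $e^{cn}$ points, not a factor $\log n$; the workable substitute is the soft-max potential $\frac1\beta\log\tr(e^{\beta A_t})$ with $\beta\sim\log n$, which pays only an additive $\frac{\log n}{\beta}$. Finally, there is no known mechanism bounding the localization drift by $L_n$; what Eldan's scheme produces is $\kappa_n$, which is then compared to $\sigma_n$ (as in \eqref{eq_eldanbis}), not to the slicing constant. So the intermediate claim $\psi_n\lesssim L_n(\log n)^\beta$ is unsupported and would itself be a major theorem.

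Second, the bootstrap. You correctly observe that $\psi_n\lesssim L_n(\log n)^\beta$ combined with $L_n\lesssim\sigma_n\lesssim\psi_n$ is a tautology, but the proposed escape --- ``a first round of tilting/conditioning so that the right-hand side becomes $\sqrt{\psi_n}$'' --- has no mechanism behind it; nothing in the sketch explains why one extra tilt would halve the exponent. The self-improvement in the actual proof comes from an entirely different source: a spectral-projection bound along the heat flow (Proposition~\ref{prop_1715}) converts the estimate $\EE|a_t|^2=\|Q_s x\|_{L^2(\mu_s)}^2\lesssim nt\max\{1,(t/t_1)^3\}$ (Corollary~\ref{lem_at}, built from Lemmas~\ref{lem_eldan} and~\ref{lem_chen}) into a bound $F(\lambda)\lesssim \lambda^{4/5}t_1^{-3/5}$ on the average spectral distribution function of the coordinate functions, after optimizing $t=\lambda^{1/5}t_1^{3/5}$; the $H^{-1}$-inequality \eqref{eq_1856} then gives $\sigma_\mu^2\lesssim t_1^{-4/5}$, and the exponent $4/5<1$ is precisely what makes $\sigma_n^2\lesssim(\sigma_n^2\log^2 n)^{4/5}$ solvable, yielding $\sigma_n\lesssim(\log n)^4$ and $\psi_n\lesssim(\log n)^5$. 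Without an analogue of that sub-unit exponent, your recursion does not terminate.
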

More precisely, we actually prove that $\sigma_n \leq C (\log n)^4 $.
This implies Theorem \ref{thm_2226}, according to \eqref{eq_1832}. Moreover, thanks to (\ref{eq_435}) we conclude from our bound for $\sigma_n$  that
\begin{equation} L_n\leq C' (\log n)^4 . \label{eq_1528} \end{equation}
As in Lee-Vempala \cite{LV} and Chen \cite{chen},
our argument relies on Eldan's stochastic localization, with the new ingredients
being the functional-analytic approach from Klartag and Putterman \cite{KP}, as well as an $H^{-1}$-inequality from \cite{ptrf}.
In our proof we analyze the evolution of a log-concave
measure $\mu$ along the heat flow $(P_s)_{s \geq 0}$ in $\RR^n$ using the
functional-analytic formalism from \cite{KP}. Eldan's stochastic localization enters
the picture through the time reversal $t:= 1/s$, and it allows us to exploit the isotropicity of $\mu$.

\medskip Throughout this note, the letters $c, C, \tilde{c}, C_1, c_2$ etc. denote positive universal constants, whose value may change from one line to the next. We usually use lower-case $c, \tilde{c}$ to denote universal constants that we view as sufficiently small, while $C, \hat{C}$ etc. usually denote constants which we view as sufficiently large. Since this paper
is concerned with asymptotics in the dimension $n$,
we may assume in the proof that $n$ exceeds a certain universal constant.
We write $x \cdot y = \langle x, y \rangle = \sum_i x_i y_i$ for the standard scalar
product between $x,y \in \RR^n$ and $|x| = \sqrt{\langle x, x \rangle}$. For symmetric matrices $A, B \in \RR^{n \times n}$
we write $A \leq B$ if $B - A$ is positive semi-definite.
By a smooth function we mean $C^{\infty}$-smooth,
and $\log$ is the natural logarithm.

\medskip {\it Acknowledgements.} The first-named author would like to thank Ronen Eldan and Eli Putterman for interesting discussions, and was partially supported by a grant from the Israel Science Foundation (ISF).

\section{Log-concave measures along the heat flow}

Let $\mu$ be an isotropic, log-concave probability measure  in $\RR^n$ with a smooth, positive density.
We begin by recalling the setting of Klartag and Putterman \cite{KP} as well as the basic properties of the Laplace 
operator that is associated with $\mu$. For $s > 0$, we write $\gamma_s$ for the density
of a Gaussian random vector of mean zero and covariance $s \cdot \id$ in $\RR^n$. Denote
$$ \mu_s = \mu * \gamma_s, $$
the convolution of $\mu$ and $\gamma_s$, with $\mu_0 = \mu$.
The heat operator $P_s f = f * \gamma_s$ is a contraction from $L^2(\mu_s)$
to $L^2(\mu)$. The adjoint operator $Q_s = P_s^*: L^2(\mu) \rightarrow L^2(\mu_s)$
satisfies
\begin{equation}  Q_s \vphi = \frac{ P_s ( \vphi \rho ) }{P_s \rho} \label{eq_2017} \end{equation}
where $\rho$ is the log-concave density of $\mu$. We define $Q_s \vphi$ via formula (\ref{eq_2017})
for all $s > 0$ and $\vphi \in L^1(\mu)$. We set $P_0 = \id$ and $Q_0 = \id$.
The Laplace operator associated with $\mu$ is the operator $L = L_{\mu}$, initially defined for compactly-supported smooth functions
via the formula
$$ L u = \Delta u + \nabla (\log \rho) \cdot \nabla u. $$
By integration by parts, it follows that for any two smooth functions $u,v: \RR^n \rightarrow \RR$,
if one of them is compactly-supported then
\begin{equation}  \int_{\RR^n} (Lu) v d\mu = -\int_{\RR^n} (\nabla u \cdot \nabla v) d \mu. \label{eq_1101} \end{equation}
It is proven e.g. in \cite[Corollary 3.2.2]{BGL} that $L$ is essentially self-adjoint in $L^2(\mu)$ and negative semi-definite.
We may thus extend the domain of definition of $L$,
and from now on we denote by $L$ the closure in $L^2(\mu)$ of the operator previously denoted by $L$.
Note that the self-adjoint operator $L$ has a simple
eigenvalue at $0$ corresponding to the constant eigenfunction.
We refer the reader to \cite{BGL, D, EMT} for  standard
background on spectral theory. By the spectral theorem (e.g., \cite[Theorem A.4.2]{BGL} or \cite[Theorem 11.5.1]{EMT}) we may write
\begin{equation}\label{eq_spectraldec} -L = \int_{-\infty}^{\infty} \lambda d E_{\lambda}
\end{equation}
for a certain increasing, right-continuous family of orthogonal projections $(E_{\lambda})_{\lambda \in \RR}$
with $\lim_{\lambda \rightarrow \infty} E_{\lambda} = \id$ and
$\lim_{\lambda \rightarrow -\infty} E_{\lambda} = 0$ in
the sense of strong convergence of operators. In particular,
\begin{equation}\label{eq_E0} E_0 f(x) = \int_{\RR^n} f d \mu
\qquad \qquad \text{for all} \ x \in \RR^n. \end{equation}
For  $f \in L^2(\mu)$
we write $\nu_f$ for the Borel measure on $\RR$ that satisfies
$\nu_f((a,b]) = \langle E_b f, f \rangle - \langle E_a f, f \rangle$ for all $a < b$, i.e.,
the spectral measure of $f$. Thus $$ \nu_f(\RR) = \| f \|_{L^2(\mu)}^2. $$
The Poincar\'e constant $C_P(\mu)$ of a log-concave
probability measure in $\RR^n$ is always finite,
by (\ref{eq_536}) and by Bobkov \cite{bobkov} or Kannan, Lov\'asz and Simonovits \cite{KLS}.
By the definition of the Poincar\'e constant,
\[
\lambda_1 := \frac 1{ C_P(\mu) }
\]
is the \emph{spectral gap} of $L$, in the sense that $E_\lambda = E_0$ for $\lambda <\lambda_1$. In other words, if $f\in L^2(\mu)$ satisfies $\int f d \mu = 0$  then 
\begin{equation}  \nu_f([0, \lambda_1)) = 0. \label{eq_1851} \end{equation}
The following proposition provides an upper bound for the spectral mass of
a given function $f$ below a certain level in terms of the $L^2 (\mu_s)$-norm of $Q_s f$.
\begin{proposition}\label{prop_1715}
Let $\mu$ be a probability measure with a smooth, positive density in $\RR^n$. 
Let $f\in  L^2 (\mu)$ satisfy $\int_{\RR^n} f \, d\mu =0$
and $\Vert f \Vert_{L^2(\mu)}= 1$.
Then for $s, \lambda > 0$, 
\[
\langle E_\lambda f , f\rangle_{L^2(\mu)}  \leq C \left(  \Vert Q_s f \Vert_{L^2(\mu_s)} + s \lambda \right),
\]
where $C > 0$ is a universal constant. Our proof gives $C = 4$.
\end{proposition}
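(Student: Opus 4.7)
The plan combines a Cauchy--Schwarz spectral estimate with a semigroup comparison between the operator $P_s Q_s$ on $L^2(\mu)$ and the $L$-heat semigroup $e^{sL}$.

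Let $a = \langle E_\lambda f, f\rangle_{L^2(\mu)} = \|E_\lambda f\|_{L^2(\mu)}^2$. The conclusion is trivial when $s\lambda \geq 1/2$ (use $a\leq 1 \leq 2s\lambda$), so I restrict to $s\lambda \leq 1/2$. I split the inner product along the $L$-semigroup, which commutes with $E_\lambda$:
\[
a \;=\; \langle e^{sL} E_\lambda f, f\rangle_{L^2(\mu)} \;+\; \langle (\id - e^{sL}) E_\lambda f, f\rangle_{L^2(\mu)}.
\]
By self-adjointness of $e^{sL/2}$ and Cauchy--Schwarz, the first summand is at most $\|e^{sL/2} E_\lambda f\|_{L^2(\mu)} \cdot \|e^{sL/2} f\|_{L^2(\mu)}$, and since $\|e^{sL/2} E_\lambda f\|_{L^2(\mu)}^2 = \int_0^\lambda e^{-s\lambda'}\, d\nu_f(\lambda') \leq a$, it is at most $\sqrt{a}\cdot \|e^{sL/2} f\|_{L^2(\mu)}$. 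The second summand equals $\int_0^\lambda (1-e^{-s\lambda'})\, d\nu_f(\lambda') \leq s\lambda \cdot a$, using $1-e^{-u}\leq u$. Rearranging the resulting inequality $a \leq \sqrt{a}\|e^{sL/2}f\|_{L^2(\mu)} + s\lambda\cdot a$ for $s\lambda\leq 1/2$ gives $\sqrt{a}/2 \leq \|e^{sL/2}f\|_{L^2(\mu)}$, hence
\[
a \;\leq\; 4\,\|e^{sL/2} f\|_{L^2(\mu)}^{2}.
\]

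The main step, and the one I expect to be the principal obstacle, is the operator comparison
\[
\|e^{sL/2} f\|_{L^2(\mu)}^{2} \;\leq\; \|Q_s f\|_{L^2(\mu_s)}^{2}, \qquad \text{equivalently }\; e^{sL} \;\leq\; P_s Q_s
\]
as positive self-adjoint operators on $L^2(\mu)$. In the Gaussian reference case $\mu = \gamma$, one diagonalises $P_s Q_s$ in the Hermite basis: a direct Mehler-type calculation gives $P_s Q_s H_k = (1+s)^{-k} H_k$, i.e.\ $P_s Q_s = e^{\log(1+s)\cdot L}$, and the inequality reduces to the elementary $\log(1+s)\leq s$. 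For general $\mu$, both sides of the inequality coincide at $s=0$ with $\|f\|_{L^2(\mu)}^{2}$, and one has the parallel identities
\[
\frac{d}{ds}\|Q_s f\|_{L^2(\mu_s)}^{2} \;=\; -\|\nabla Q_s f\|_{L^2(\mu_s)}^{2},\qquad \frac{d}{ds}\|e^{sL/2}f\|_{L^2(\mu)}^{2} \;=\; -\|\nabla e^{sL/2}f\|_{L^2(\mu)}^{2},
\]
reducing the comparison to an integrated inequality between the two Dirichlet energies. A natural route is to use the Tweedie-type identity $\nabla Q_r f(y) = r^{-1}\cov_{\mu_y}(X,f)$, where $\mu_y$ has density proportional to $\rho(x)\gamma_r(y-x)$, combined with the Cauchy--Schwarz bound $|\cov_{\mu_y}(X,f)|^{2}\leq \tr(\cov_{\mu_y}(X))\cdot \var_{\mu_y}(f)$, to estimate $\|\nabla Q_r f\|_{L^2(\mu_r)}^{2}$ against spectral-calculus quantities associated with $L$.

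Combining the two ingredients, for $s\lambda\leq 1/2$ we obtain $a \leq 4\|Q_s f\|_{L^2(\mu_s)}^{2} \leq 4\|Q_s f\|_{L^2(\mu_s)}$, where the last step uses the contraction $\|Q_s f\|_{L^2(\mu_s)}\leq \|f\|_{L^2(\mu)} = 1$. Together with the trivial bound in the range $s\lambda > 1/2$, this yields $\langle E_\lambda f, f\rangle_{L^2(\mu)} \leq 4\bigl(\|Q_s f\|_{L^2(\mu_s)} + s\lambda\bigr)$, giving the proposition with $C = 4$.
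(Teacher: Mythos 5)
Your first step is fine: the decomposition $a=\langle e^{sL}E_\lambda f,f\rangle+\langle(\id-e^{sL})E_\lambda f,f\rangle$, Cauchy--Schwarz through $e^{sL/2}$, and the bound $1-e^{-u}\le u$ correctly give $a\le 4\|e^{sL/2}f\|_{L^2(\mu)}^2$ when $s\lambda\le 1/2$, and the reduction to the operator comparison $e^{sL}\le P_sQ_s$ is clean. The problem is that this comparison --- which you yourself flag as the main step --- is not proved, and it is in fact precisely the open question recorded in Remark 3.4 of the paper: the authors state that they have no counterexample to $P_sQ_s\ge e^{sL}$ but also cannot prove it; they only establish the much weaker trace inequality $\tr\,\vphi(P_sQ_s)\ge\tr\,\vphi(e^{sL})$. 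What is actually provable (their Lemma 3.1, via convexity of $s\mapsto\log\|Q_sg\|^2_{L^2(\mu_s)}$) is the scalar bound $\|Q_sg\|^2_{L^2(\mu_s)}\ge\|g\|^2\exp(-s\,E/\|g\|^2)$ with $E=\|g\|_{\dot H^1(\mu)}^2$; by Jensen this is \emph{weaker} than $\langle e^{sL}g,g\rangle$, so it cannot be upgraded to the quadratic-form inequality you need. Your sketched route via the ``parallel identities'' for the two Dirichlet energies does not close this gap either: $\|\nabla Q_sf\|^2_{L^2(\mu_s)}$ and $\|\nabla e^{sL/2}f\|^2_{L^2(\mu)}$ live over different measures and different functions, and there is no evident pointwise or integrated domination between them; indeed, if such a comparison were available it would resolve the open problem of Remark 3.4.

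The paper avoids this obstacle by exploiting that the function to which the tangent-line bound is applied is the \emph{low-energy projection} $g=E_\lambda f$, for which $E\le\lambda\beta$ with $\beta=\|g\|^2$, so that $\langle P_sQ_sg,g\rangle\ge\beta(1-s\lambda)$ follows from Lemma 3.1 alone. The remaining (and genuinely nontrivial) step is their Lemma 3.2, an abstract spectral argument transferring the lower bound on $\langle Ag,g\rangle$ with $A=P_sQ_s$ to a lower bound $\langle Af,f\rangle\ge\beta/4-s\lambda$ on the original function $f$; this is where the constant $4$ arises. To repair your proof you would need either to supply a proof of $e^{sL}\le P_sQ_s$ (open), or to replace your main step by an argument of this projection-and-transfer type.
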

We prove Proposition \ref{prop_1715} in Section \ref{sec_1700}. 
In order to estimate $\Vert Q_s f\Vert^2_{L^2(\mu_s)}$ from above it is convenient to set $t := 1/s$ and to use the framework  of Eldan's stochastic localization.
We refer the reader to \cite{chen, Eldan1, LV} for background on this subject
and for further explanations,
and we use the notation from \cite[Section 4]{KP}. For $t \geq 0$ and $\theta \in \RR^n$ consider
the probability density
$$ p_{t, \theta}(x) = \frac{1}{Z(t,\theta)} e^{\langle \theta, x \rangle - t |x|^2/2} \rho(x) \qquad \qquad \qquad (x \in \RR^n) $$
where $Z(t, \theta) = \int_{\RR^n} e^{\langle \theta, x \rangle - t |x|^2/2} \rho(x) dx$ and we recall that $\rho$
is the log-concave density of the measure $\mu$. The barycenter of $p_{t, \theta}$ is
$$ a(t,\theta) = \int_{\RR^n} x p_{t, \theta}(x) \, dx \in \RR^n. $$
We consider the ``tilt process'', the stochastic process $(\theta_t)_{t \geq 0}$ in $\RR^n$ that satisfies the stochastic differential equation
\begin{equation}  \theta_0 = 0 , \quad d \theta_t = d W_t + a(t, \theta_t) dt, \label{eq_910} \end{equation}
where $(W_t)_{t \geq 0}$ is a standard Brownian motion in $\RR^n$. The existence and uniqueness of a strong solution to (\ref{eq_910}) are standard.
Setting $p_t(x) = p_{t, \theta_t}(x)$ and $a_t = a(t, \theta_t)$ we obtain the equation of {\it Eldan's stochastic localization}
in the Lee-Vempala formulation:
\begin{equation}  d p_t(x) = p_t(x) \langle x - a_t, d W_t \rangle \qquad \qquad \qquad (x \in \RR^n)
\label{eq_1048} \end{equation}
with $p_0(x) = \rho(x)$.
\begin{lemma}\label{lem_1043}
Let $\vphi \in L^1(\mu)$ and $s > 0$. Consider the stochastic process $M_t = \int_{\RR^n} \vphi p_t$ defined for $t \geq 0$.
Then with $t = 1/s$,
$$ \EE M_t^2 = \| Q_s \vphi \|_{L^2(\mu_s)}^2. $$
\end{lemma}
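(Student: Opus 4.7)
The plan is to reduce $M_t$ to a function of $\theta_t$ alone, and then identify the law of $s \theta_t$ as $\mu_s$; the identity of the lemma then follows by a one-line calculation.

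For the first reduction, I complete the square in the exponent defining $p_{t,\theta}$. With $s = 1/t$ and $y = s \theta$, the algebraic identity
\[
\langle \theta, x \rangle - \tfrac{t}{2} |x|^2 \;=\; -\tfrac{1}{2s} |x - y|^2 + \tfrac{1}{2s} |y|^2
\]
rewrites $e^{\langle \theta, x \rangle - t|x|^2/2}$ as $(2\pi s)^{n/2} e^{|y|^2 / (2s)} \gamma_s(x - y)$. Consequently
\[
p_{t,\theta}(x) \;=\; \frac{\gamma_s(x - y)\, \rho(x)}{P_s \rho(y)}, \qquad Z(t, \theta) \;=\; (2\pi s)^{n/2} e^{|y|^2 / (2s)} P_s \rho(y),
\]
and integrating against $\varphi$ recognizes the definition of $Q_s$:
\[
M_t \;=\; \int_{\RR^n} \varphi\, p_t \, dx \;=\; \frac{P_s(\varphi \rho)(s \theta_t)}{P_s \rho(s \theta_t)} \;=\; (Q_s \varphi)(s \theta_t).
\]

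For the second step, the SDE \eqref{eq_1048} makes $t \mapsto p_t(x)$ a positive local (hence true) martingale starting at $\rho(x)$, so $\EE p_t(x) = \rho(x)$ for every $x$ and $t$. Conditioning on $\theta_t$ and substituting the formula for $p_{t,\theta}$ from Step~1, dividing by $\rho(x) > 0$, and pushing the law of $\theta_t$ forward by $\theta \mapsto s\theta$ to a measure $\tilde\pi_t$ gives
\[
\int_{\RR^n} \gamma_s(x - y) \, \frac{d\tilde\pi_t(y)}{P_s \rho(y)} \;=\; 1 \qquad (x \in \RR^n).
\]
Writing $d\tilde\pi_t = h\, d\mu_s$, this reads $\gamma_s \ast h \equiv 1 \equiv \gamma_s \ast 1$, and injectivity of Gaussian convolution (its Fourier transform is nowhere zero) forces $h \equiv 1$. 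Hence $s \theta_t \sim \mu_s$, and combining with Step~1 yields
\[
\EE M_t^2 \;=\; \EE (Q_s \varphi)(s \theta_t)^2 \;=\; \int_{\RR^n} (Q_s \varphi)^2 \, d\mu_s \;=\; \|Q_s \varphi\|_{L^2(\mu_s)}^2.
\]

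The main point that requires a little care is justifying the Fourier-injectivity step: one needs $\tilde\pi_t$ to be absolutely continuous with respect to $\mu_s$ so that $h$ is an honest function and $h - 1$ is a tempered distribution annihilated by $\gamma_s \ast$. This is comfortable because the nondegenerate Brownian noise in \eqref{eq_910} endows $\theta_t$ with a smooth density, and $P_s \rho$ is smooth and strictly positive under our assumptions on $\rho$. As an alternative one can bypass the uniqueness argument by computing $\pi_t$ directly via Girsanov: It\^o's formula applied to $\log Z(t, \theta_t)$, together with $a = \nabla_\theta \log Z$ and $\partial_t \log Z = -\tfrac{1}{2} \int |x|^2 p_{t, \theta}\, dx$, gives $\log Z(t, \theta_t) = \int_0^t a(u, \theta_u) \cdot d\theta_u - \tfrac{1}{2} \int_0^t |a|^2 \, du$, so the Radon--Nikodym derivative removing the drift from \eqref{eq_910} is $Z(t, \theta_t)^{-1}$; the change of variable $y = s\theta$ then recovers $\tilde\pi_t = \mu_s$ on the nose.
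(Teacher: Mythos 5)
Your proof is correct and follows essentially the same route as the paper's: the paper simply cites Lemma 2.1 of Klartag--Putterman for the identity $\int \varphi \, p_{t,ty} = Q_s\varphi(y)$ and Section 4 of that paper for the fact that $\theta_t/t$ has law $\mu_s$, and you verify both facts directly (your Girsanov computation is precisely how the second is established there). The only quibble is the parenthetical ``positive local (hence true) martingale,'' which is not a valid inference in general --- positivity yields only a supermartingale --- but your Girsanov alternative renders that step unnecessary.
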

\begin{proof} Let $y \in \RR^n$ and set $\theta = t y$. It follows from \cite[Lemma 2.1]{KP} that
\begin{equation}  Q_s \vphi (y) = \int_{\RR^n} \vphi p_{t, \theta} =: M(t, \theta). \label{eq_1011} \end{equation}
Moreover, the law of the random vector $\theta_t / t$ is the probability measure $\mu_s$, as explained in \cite[Section 4]{KP}. Therefore, by (\ref{eq_1011}),
$$ \EE M_t^2 = \EE \left[ M(t, \theta_t)^2 \right]= \EE \left[ (Q_s \vphi)^2 \left( \frac{ \theta_t}{t} \right) \right]
= \int_{\RR^n} (Q_s \vphi)^2 d \mu_s, $$
completing the proof.
\end{proof}
For a vector-valued function $f = (f_1,\ldots,f_n): \RR^n \rightarrow \RR^n$ we
define $Q_s f: \RR^n \rightarrow \RR^n$ coordinate-wise,
and denote $\| f \|_{L^2(\mu)}^2 = \sum_i \| f_i \|_{L^2(\mu)}^2$.
Since $$ a_t = \int_{\RR^n} x p_t(x) \, dx, $$
we conclude from Lemma \ref{lem_1043}
that for any $s > 0$, with $t = 1/s$,
\begin{equation}
\EE |a_t|^2 = \| Q_s x \|_{L^2(\mu_s)}^2. \label{eq_1044}
\end{equation}
For $t > 0$ and $\theta \in \RR^n$ denote
$$ A(t,\theta) = \cov(p_{t, \theta}) = \int_{\RR^n} \left[ x \otimes x \right] p_{t, \theta}(x) dx - a(t,\theta) \otimes a(t,\theta) \in \RR^{n \times n}, $$
the covariance matrix of $p_{t, \theta}$. Here $x \otimes x = (x_i x_j)_{i,j=1,\ldots,n} \in \RR^{n \times n}$
for $x = (x_1,\ldots,x_n) \in \RR^n$. Set $A_t = A(t, \theta_t)$. It follows from (\ref{eq_1048}) that
$$ d a_t = d \left[ \int_{\RR^n} x p_t(x) dx \right] = A_t d W_t $$
and consequently,
\begin{equation}  \frac{d}{dt} \EE |a_t|^2 = \EE \| A_t \|_2^2,
\label{eq_1049} \end{equation}
where we write $\| A_t \|_q = \tr[A_t^q]^{1/q}$ for the $q$-Schatten norm of the matrix $A_t$. Note that the random matrix $A_t$
is symmetric and positive definite, since it is the covariance matrix of an absolutely-continuous probability measure in $\RR^n$.

\medskip 
In addition to the parameters $\sigma_n$ and $\psi_n$ mentioned above, we shall also need the quantity $\kappa_n > 0$ defined via
\begin{equation}\label{eq_kappan}
\kappa_n^2 := \sup_X \sup_{\theta\in S^{n-1}}
\left\{  \left\Vert \EE \langle X,\theta\rangle (X\otimes X) \right\Vert_2^2 \right\} ,
\end{equation}
where the first supremum runs over all isotropic log-concave random vectors $X$ in $\RR^n$,
and where $S^{n-1} = \{ x \in \RR^n \, ; \, |x| = 1 \}$ is the unit sphere.
The relation between $\psi_n, \kappa_n$ and $\sigma_n$ proven by Eldan \cite{Eldan1} is
\begin{equation}\label{eq_eldanbis}
\psi_n^2 \leq  C \log n \cdot \kappa_n^2 \leq \tilde{C} \log^2 n \cdot \sigma_n^2,
\end{equation}
where $C, \tilde{C} > 0$ are universal constants. The core of Eldan's argument is the first inequality of~\eqref{eq_eldanbis},
whereas the second inequality  is relatively easy. The following lemma essentially follows from Eldan's ideas ~\cite{Eldan1}. However,
the stochastic localization used in \cite{Eldan1}
is slightly different, and the focus
there is on tail probabilities
rather than on the expectation of $\| A_t \|_q^q$. We thus provide a
detailed proof of the following lemma in Section~\ref{sec_eldan}.

\begin{lemma}\label{lem_eldan}
For $t\leq (C \kappa_n^2 \cdot \log n)^{-1}$ and $q\geq 1$ we have
\[
\EE \Vert A_t \Vert_q^q  \leq C_q n ,
\]
where $C > 0$ is a universal constant and $C_q$ depends only on $q$.
\label{lem_1135}
\end{lemma}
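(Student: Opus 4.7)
My strategy for Lemma~\ref{lem_eldan} is to apply It\^o's formula to $\tr(A_t^q)$, use the definition~\eqref{eq_kappan} of $\kappa_n$ to bound the third-moment contributions that arise, and close the resulting differential inequality via a stopping-time argument.

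From~\eqref{eq_1048} and It\^o calculus applied to $A_t = \cov(p_t)$, together with $da_t = A_t\,dW_t$, one first derives the matrix SDE
$$ dA_t = \mathcal{T}_t \cdot dW_t - A_t^2\,dt,\qquad A_0 = I_n, $$
where the centred third-moment tensor $\mathcal{T}_t$ has entries $(\mathcal{T}_t)_{ijk} = \int (x-a_t)_i(x-a_t)_j(x-a_t)_k\,p_t(x)\,dx$ and $(\mathcal{T}_t\cdot dW_t)_{ij}=\sum_k(\mathcal{T}_t)_{ijk}(dW_t)_k$; the dissipative drift $-A_t^2$ is the crucial contractive term. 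Applying It\^o's formula to $f(A) = \tr(A^q)$, whose Hessian satisfies $f''(A)[H,H] = q\sum_{r=0}^{q-2}\tr(A^r H A^{q-2-r} H)$, and writing $T_{t,m}$ for the symmetric $n\times n$ matrix with entries $(\mathcal{T}_t)_{ijm}$, one gets
$$ d\,\tr(A_t^q) = -q\,\tr(A_t^{q+1})\,dt + \tfrac{q}{2}\sum_{r=0}^{q-2}\sum_m \tr(T_{t,m}A_t^r T_{t,m}A_t^{q-2-r})\,dt + dM_t, $$
with $M_t$ a local martingale. Diagonalising $A_t$ and applying weighted AM--GM to $\sum_r \lambda_i^r\lambda_j^{q-2-r}$ delivers the compact bound $\sum_r \tr(T_m A^r T_m A^{q-2-r}) \leq (q-1)\tr(T_m^2 A^{q-2})$.

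To estimate $\sum_m \tr(T_{t,m}^2 A_t^{q-2})$ I would isotropise $p_t$ via $Y := A_t^{-1/2}(X-a_t)$ with $X\sim p_t$; the random vector $Y$ is isotropic and log-concave. The identity $T_{t,m} = A_t^{1/2}\,S_{A_t^{1/2}e_m}\,A_t^{1/2}$ with $S_\theta := \EE[\langle Y,\theta\rangle\,Y\otimes Y]$ satisfying $\|S_\theta\|_2 \leq \kappa_n|\theta|$ by~\eqref{eq_kappan} yields, after a short tensor computation, $\|\sum_m T_{t,m}^2\|_{\mathrm{op}} \leq \kappa_n^2\|A_t\|_{\mathrm{op}}^3$. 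Hence
$$ \frac{d}{dt}\EE\,\tr(A_t^q) \leq C q^3 \kappa_n^2\,\EE\bigl[\|A_t\|_{\mathrm{op}}^3\,\tr(A_t^{q-2})\bigr]. $$

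Since the right-hand side couples $\|A_t\|_{\mathrm{op}}$ with $\tr(A_t^{q-2})$, Gr\"onwall does not close directly. I would circumvent this by introducing the stopping time $\tau := \inf\{s : \|A_s\|_{\mathrm{op}} > C_1\}$ for a suitable absolute constant $C_1$: on $[0,\tau]$ one has $\|A_s\|_{\mathrm{op}}^3\,\tr(A_s^{q-2}) \leq C_1^{q+1}n$, so a straightforward Gr\"onwall argument gives $\EE\tr(A_{t\wedge\tau}^q) \leq C_q n$ throughout the claimed time interval. What remains is to show that $\PP(\tau \leq t)$ is small enough that the contribution on $\{\tau \leq t\}$ --- where only the deterministic Brascamp--Lieb bound $\|A_t\|_{\mathrm{op}} \leq 1/t$ is available --- is negligible. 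For this I would analyse $\log\langle A_s v, v\rangle$ on a $\tfrac12$-net of $S^{n-1}$: each such process is a supermartingale whose quadratic variation is controlled by the same $\kappa_n$-based tensor estimate, and an exponential-martingale argument combined with the $1/t$ deterministic bound yields a sub-Gaussian tail. The principal technical obstacle I anticipate is precisely this quantitative stopping-time estimate: matching the sub-Gaussian tail against the $e^{Cn}$ entropy of the net and against the blow-up $1/t^q$ on the exceptional event is what should ultimately produce the factor $\log n$ in the permissible time horizon.
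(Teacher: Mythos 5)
Your overall skeleton --- the matrix SDE for $A_t$, the $\kappa_n$-based control of the third-moment tensor, a stopping time at $\Vert A_s\Vert_{op}\sim\mathrm{const}$, and the deterministic bound $\Vert A_t\Vert_{op}\leq 1/t$ on the exceptional event --- matches the paper's. But the step you flag as the ``principal technical obstacle'' is not a technicality to be checked later: it is the heart of the lemma, and the mechanism you propose for it cannot work. Controlling $\PP(\tau\leq t)$ by an exponential-martingale bound for $\log\langle A_s v,v\rangle$ over a $\tfrac12$-net of $S^{n-1}$ requires beating a union bound of size $e^{cn}$. For a single direction $v$, the martingale part of $\langle A_s v,v\rangle$ has quadratic variation rate of order $\Vert A_s\Vert_{op}^{3}$ (by reverse H\"older inequalities for log-concave measures), which is of order $1$ up to the stopping time; hence the individual tail over a time window of length $t$ is only $\exp(-c/t)$. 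Beating the net entropy therefore forces $t\lesssim 1/n$, which falls far short of the required horizon $t_1=(C\kappa_n^2\log n)^{-1}$: in the relevant regime $\kappa_n^2\log n\ll n$ (ultimately $\kappa_n$ is polylogarithmic), so $t_1\gg 1/n$. In other words, the net entropy would put a factor $n$, not $\log n$, into the time constraint. The paper (following Eldan and Lee--Vempala) avoids the union bound altogether by tracking the soft-max potential $\Phi_t=\beta^{-1}\log\tr(e^{\beta A_t})$ with $\beta=2\log n$: its approximation error to $\Vert A_t\Vert_{op}$ is $(\log n)/\beta=\tfrac12$, its It\^o drift picks up exactly one factor of $\beta$ against the bound $\sum_j\vert u_{ij}\vert^2\leq\kappa_n^2\Vert A_t\Vert_{op}^3$ supplied by Lemma~\ref{lem_kappa} (the analogue of your tensor estimate), and its martingale part is a convex combination of single-direction martingales and hence still has quadratic variation $O(t)$. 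That is precisely where the $\log n$ in the admissible time horizon comes from, and it is the ingredient your proposal is missing.

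Two smaller points. First, once one has $\PP(\Vert A_t\Vert_{op}\geq 2)\leq e^{-1/(Ct)}$, the It\^o computation for $\tr(A_t^q)$ is unnecessary: the paper simply writes $\EE\Vert A_t\Vert_{op}^p\leq 2^p+t^{-p}\,\PP(\Vert A_t\Vert_{op}\geq 2)\leq C_p$ and concludes via $\Vert A_t\Vert_q^q\leq n\Vert A_t\Vert_{op}^q$. Second, your Hessian formula for $A\mapsto\tr(A^q)$ only makes sense for integer $q\geq 2$, whereas the lemma is stated for all real $q\geq 1$; the operator-norm route sidesteps this as well.
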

Applying this lemma with $q=2$ we see that $\EE \Vert A_t\Vert_2^2$ has  the order
of magnitude of $n = \| A_0 \|_2^2$  for all values of $t$ up to time  $t_1 = (C \kappa_n^2 \cdot \log n)^{-1}$.
We would like to show that this quantity cannot increase too rapidly also  beyond time $t_1$. This is the content of the next lemma,
which is rather similar to Chen's lemma
\cite[Lemma 8]{chen}.   Chen's lemma
is the key ingredient of his sub-polynomial bound for the KLS constant.

\begin{lemma}\label{lem_chen}
For any $0 \leq t_1 \leq t_2$, we have
\[ \EE \| A_{t_2} \|_2^2 \leq \left( \frac{t_2}{t_1} \right)^3 \EE \| A_{t_1} \|_2^2. \]
\end{lemma}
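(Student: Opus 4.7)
The plan is to derive the differential inequality $\frac{d}{dt} \EE \tr A_t^2 \leq \frac{3}{t}\EE \tr A_t^2$, so that $t \mapsto \EE \tr A_t^2/t^3$ is non-increasing; the lemma will follow by integrating between $t_1$ and $t_2$. The first step is to identify the SDE governing the evolution of the matrix $A_t$. From the equation \eqref{eq_1048} for $p_t$, a direct Ito computation yields
$$ dA_t = d\mathcal{M}_t - A_t^2\, dt, $$
where $\mathcal{M}_t$ is a matrix-valued martingale with $d\mathcal{M}_t^{ij} = \sum_k M_t^{ijk}\, dW_t^k$ and
$$ M_t^{ijk} = \int_{\RR^n}(x-a_t)_i (x-a_t)_j (x-a_t)_k\, p_t(x)\, dx $$
is the centered third-moment tensor of $p_t$. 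A second application of Ito to $\tr A_t^2 = \|A_t\|_2^2$, followed by taking expectation to kill the martingale piece, gives
$$ \frac{d}{dt} \EE \| A_t \|_2^2 = \EE \| M_t \|_2^2 - 2\, \EE \tr A_t^3, \qquad \| M_t \|_2^2 := \sum_{ijk}\bigl(M_t^{ijk}\bigr)^2. $$

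The core of the proof is then a pointwise bound
$$ \| M_t \|_2^2 \leq 2\, \tr A_t^3 + \tfrac{3}{t}\, \tr A_t^2, $$
from which the $2\tr A_t^3$ terms cancel and the desired inequality $\frac{d}{dt}\EE\|A_t\|_2^2 \leq \tfrac{3}{t}\EE\|A_t\|_2^2$ drops out. Two ingredients enter. First, the measure $p_t$ is $t$-strongly log-concave, since $-\log p_t = -\log \rho + \tfrac{t}{2}|x|^2 - \langle \theta_t, x \rangle$ has Hessian $\geq t I$; by Brascamp--Lieb, $A_t \leq \tfrac{1}{t}I$ in the Loewner order. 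Second, there is the classical sharp one-dimensional bound $M^2 \leq 4 A^3$ for the third central moment of a centered log-concave random variable with variance $A$ (attained by the exponential distribution). Combining the two in dimension one is immediate: split $4A^3 = 2A^3 + 2A^3$ and use $A \leq 1/t$ to upgrade the second copy into $2A^2/t \leq 3A^2/t$. For the multidimensional version I would write $M_t^{ijk} = \cov_{p_t}\bigl((X - a_t)_i,\; (X-a_t)_j (X-a_t)_k\bigr)$ and control its Hilbert--Schmidt norm through the Brascamp--Lieb covariance inequality for the $t$-strongly log-concave $p_t$, combined with Poisson solutions $u_i$ of $-L_{p_t} u_i = (X - a_t)_i$, in the spirit of the functional-analytic framework of \cite{KP} used elsewhere in this paper.

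With the tensor bound in hand, $\frac{d}{dt}\EE\|A_t\|_2^2 \leq \tfrac{3}{t}\EE\|A_t\|_2^2$, and Gronwall (equivalently, monotonicity of $t \mapsto \EE\|A_t\|_2^2/t^3$) yields the claim. The main obstacle, in my view, is the multidimensional tensor estimate: a naive direction-by-direction use of the one-dimensional $M^2 \leq 4A^3$ bound followed by integration over directions loses $\sqrt n$-type factors in the dimension, so a genuinely tensorial argument, exploiting both the covariance representation of $M_t$ and the full strong log-concavity of $p_t$ via Helffer--Sj\"ostrand- or Brascamp--Lieb-type identities, is required to keep the bound dimension-free.
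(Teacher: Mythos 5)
Your outer structure is exactly the paper's: the same SDE for $A_t$, the same It\^o computation giving
$\frac{d}{dt}\EE\|A_t\|_2^2 = \EE\|M_t\|_2^2 - 2\,\EE\tr(A_t^3)$, and the same Gronwall integration of
$\frac{d}{dt}\EE\|A_t\|_2^2 \le \frac{3}{t}\EE\|A_t\|_2^2$. But the entire content of the lemma is the dimension-free tensor estimate, and you have not proved it --- you say yourself that the naive direction-by-direction use of the one-dimensional bound loses $\sqrt n$ factors and that ``a genuinely tensorial argument \dots is required,'' and then you only gesture at Brascamp--Lieb/Helffer--Sj\"ostrand without carrying anything out. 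That is a genuine gap, not a routine verification: this estimate is Lemma \ref{lem_keyChen}, the heart of Section \ref{sec_chen}, and the paper's proof of it does not go through Brascamp--Lieb at all. Writing $\|M_t\|_2^2=\EE\langle X,Y\rangle^3$ for independent copies $X,Y$ of the recentered law $p_t(\cdot+a_t)$, the paper represents $X=\int_0^1 Q_s\,dB_s$ with adapted symmetric $0\le Q_s\le\id$ (via Caffarelli's contraction theorem applied to the Brenier map, heat-flow regularized, or via the Eldan--Lehec process), applies It\^o to get $\EE\langle X,Y\rangle^3=3\int_0^1\EE[\langle X_s,Y\rangle\,|Q_sY|^2]\,ds$, bounds the integrand by Cauchy--Schwarz together with the Cordero-Erausquin--Fradelizi--Maurey improvement of the Poincar\'e inequality (constant $\tfrac12$ for functions with centered gradient, applied to $|QY|^2$), and closes the computation using $\int_0^1\EE Q_s^2\,ds=A$ and $Q_s^4\le Q_s^2$ to land on $\EE\langle X,Y\rangle^3\le 3\sqrt2\cdot\bigl(\tfrac12\tr(A^2)^2\bigr)^{1/2}=\tfrac{3}{t}\tr(A^2)$ after undoing the scaling. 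None of these ingredients appears in your sketch, and it is not clear that a Helffer--Sj\"ostrand representation of $M^{ijk}=\cov(X_i,X_jX_k)$ would deliver a bound with the constant $3$ (or any absolute constant) after summing over all $i,j,k$; the cross terms in $\sum_{ijk}(M^{ijk})^2$ are exactly where the difficulty sits.

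A secondary remark: your target $\|M_t\|_2^2\le 2\tr A_t^3+\frac{3}{t}\tr A_t^2$ is a legitimate variant (it would let the $2\tr A_t^3$ cancel against the drift), but the paper simply proves the cleaner $\|M_t\|_2^2\le\frac{3}{t}\tr A_t^2$ and discards the non-positive term $-2\tr(A_t^3)$. Your splitting $4A^3=2A^3+2A^3$ in dimension one does not buy you anything in the multidimensional problem, because there is no coordinate system in which the third-moment tensor diagonalizes alongside $A$. As it stands, the proposal sets up the right differential inequality but leaves its only nontrivial input unestablished.
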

In \cite[Lemma 8]{chen}, Chen proves an analogous inequality  for  $\Vert A_t\Vert_q$ for all $q\geq 3$.
We were not able to deduce Lemma~\ref{lem_chen} from the
analysis in \cite{chen}, which  seems to breaks down for $q<3$.
We prove Lemma \ref{lem_chen} in
Section~\ref{sec_chen}. Combining Lemma~\ref{lem_eldan} with Lemma~\ref{lem_chen} we arrive at the following.
\begin{corollary}\label{lem_at}
Setting $t_1 = (C\kappa_n^2\cdot \log n)^{-1}$ we have
\[
\EE \vert a_t\vert^2  \leq
C_1 n \cdot t \cdot \max \left \{ 1 , \frac{t^3}{t_1^3} \right \}, \quad \forall t > 0,
\]
where $C, C_1 > 0$ are universal constants.
\end{corollary}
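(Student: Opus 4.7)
The plan is to obtain $\EE|a_t|^2$ by integrating the identity~(\ref{eq_1049}), splitting the integration at time $t_1$ and using Lemma~\ref{lem_eldan} on $[0,t_1]$ and Lemma~\ref{lem_chen} on $[t_1,\infty)$.

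Since $\mu$ is isotropic, $a_0=0$, so integrating~(\ref{eq_1049}) yields $\EE|a_t|^2 = \int_0^t \EE\|A_s\|_2^2\, ds$ for every $t \geq 0$. Applying Lemma~\ref{lem_eldan} with $q=2$ gives a constant $C_2$ such that $\EE\|A_s\|_2^2 \leq C_2 n$ whenever $0 \leq s \leq t_1$. So for $t\leq t_1$ we immediately obtain $\EE|a_t|^2 \leq C_2 n t$, which is the desired bound in this regime since $\max\{1,(t/t_1)^3\}=1$.

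Now suppose $t>t_1$. Split the integral:
\[
\EE|a_t|^2 = \int_0^{t_1}\EE\|A_s\|_2^2\, ds + \int_{t_1}^{t}\EE\|A_s\|_2^2\, ds.
\]
The first term is at most $C_2 n t_1$. For the second term I would apply Lemma~\ref{lem_chen} with the fixed left endpoint $t_1$ and a moving right endpoint $s \in [t_1,t]$, together with $\EE\|A_{t_1}\|_2^2 \leq C_2 n$ from Lemma~\ref{lem_eldan}, to get $\EE\|A_s\|_2^2 \leq (s/t_1)^3 \cdot C_2 n$. Integrating,
\[
\int_{t_1}^{t}\EE\|A_s\|_2^2\, ds \leq \frac{C_2 n}{t_1^3}\int_{t_1}^{t} s^3\, ds \leq \frac{C_2 n}{4}\cdot \frac{t^4}{t_1^3}.
\]
Combining the two pieces and using $t_1 \leq t$ to absorb the first term into the second ($C_2 n t_1 \leq C_2 n t \leq C_2 n t\cdot (t/t_1)^3$), we conclude
\[
\EE|a_t|^2 \leq C_2 n t_1 + \frac{C_2 n}{4} \cdot t \cdot \left(\frac{t}{t_1}\right)^3 \leq \frac{5C_2}{4}\, n\, t\cdot \max\left\{1, \frac{t^3}{t_1^3}\right\},
\]
which is the claimed estimate with $C_1 = 5C_2/4$.

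There is no genuine obstacle here: the argument is a two-regime integration, and the only care required is to invoke Lemma~\ref{lem_chen} with the correct ordering $0\leq t_1\leq s$ when bounding $\EE\|A_s\|_2^2$ on the tail $s\geq t_1$, and to check that the endpoint contribution $C_2 n t_1$ from the initial interval is dominated by the tail bound.
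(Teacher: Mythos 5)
Your argument is correct and is essentially identical to the paper's proof: integrate \eqref{eq_1049} from $0$, use Lemma~\ref{lem_eldan} with $q=2$ on $[0,t_1]$ and Lemma~\ref{lem_chen} with left endpoint $t_1$ on the tail, then integrate. The only difference is that you track the constant explicitly, which the paper does not bother to do.
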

\begin{proof}
Recall~\eqref{eq_1049} and observe that $a_0$ is the barycenter of $\mu$,
which is assumed to be $0$. Thus
\[
\EE \vert a_t\vert^2 = \int_0^t  \EE \Vert A_r\Vert_2^2 \, dr .
\]
For $r \in [0,t_1]$ we have $\EE \Vert A_r\Vert_2^2  \leq C n$ according to Lemma~\ref{lem_eldan}. For $r\geq t_1$,
\[
\EE \Vert A_r\Vert_2^2  \leq \EE  \Vert A_{t_1}\Vert_2^2  \cdot \frac{r^3}{t_1^3}
\leq C_1 n \cdot \frac{r^3}{t_1^3}  ,
\]
by Lemma~\ref{lem_chen}. The result follows by integrating these inequalities from $0$ to $t$.
\end{proof}
\begin{remark}
It might be possible and it could be interesting to remove all stochastic processes from the argument,
and perform our analysis, as well as that of Lee-Vempala \cite{LV} and Chen \cite{chen},
using differentiations along the heat semi-group and integrations by parts in place of It\^o calculus.
The advantage of the stochastic point of view, is that the evolution equation (\ref{eq_910})
provides a convenient coupling between the probability measures $\mu_{s_1}$ and $\mu_{s_2}$,
which enables us to analyze the $s$-derivatives of certain integrals with respect to the measure $\mu_s$.
We remark that an alternative coupling to (\ref{eq_910}), which has so far been less useful for our analysis,
 is provided by the
deterministic evolution equation
\begin{equation} d \theta_t = \frac{1}{2} \left( a(t, \theta_t) + \frac{\theta_t}{t} \right) dt,
\label{eq_914}
\end{equation}
which is referred to as the Kim-Milman map in \cite{KP}.
For any $0 < t_1 < t_2$,  if $\theta_{t_1} / t_1$ is a random vector with law $\mu_{s_1}$ for $s_1 = 1/t_1$, and we run
either the evolution (\ref{eq_910}) or else the evolution (\ref{eq_914}) for $t \in [t_1, t_2]$,
then $\theta_{t_2} / t_2$ is a random vector with law $\mu_{s_2}$ for $s_2 = 1/t_2$.
\end{remark}
The next three sections are dedicated to the proofs of
Proposition \ref{prop_1715}, Lemma \ref{lem_chen} and Lemma \ref{lem_eldan}.
Finally in Section \ref{sec_proof} we explain how all pieces fit together and
prove Theorem \ref{thm_2226}. The basic idea is
that Corollary \ref{lem_at} provides a bound for $\EE |a_t|^2 = \| Q_s x \|_{L^2(\mu_s)}$, with $s = 1/t$,
which together with Proposition \ref{prop_1715} and the $H^{-1}$-inequality
of~\cite{ptrf} yields a bound on the thin-shell parameter of $\mu$.
\section{Spectral measures and heat flow}
\label{sec_1700}
In this section we prove Proposition \ref{prop_1715}.
Assume that $\mu$ is a log-concave probability measure with a smooth, positive density $\rho$ in
$\RR^n$. Recall the spectral decomposition~\eqref{eq_spectraldec} of the Laplace operator associated to
$\mu$, and recall that for a given function $f \in L^2(\mu)$ the notation $\nu_f$ stands for its spectral measure.
 Write $H^1(\mu)$ for the space of all functions $f \in L^2(\mu)$ whose weak derivatives $\partial^1 f,\ldots,\partial^n f$ exist and belong to $L^2(\mu)$.
For $f \in H^1(\mu)$ we define
$$ \| f \|_{\dot{H}^{1}(\mu)}^2 = \int_{\RR^n} |\nabla f|^2 d \mu $$
and $\| f \|_{H^1(\mu)} = \sqrt{ \| f \|_{L^2(\mu)}^2 + \| f \|_{\dot{H}^{1}(\mu)}^2}$. It is proven in the appendix of \cite{BK} that compactly-supported smooth functions are dense in $H^1(\mu)$, with respect to the $H^{1}(\mu)$-norm.
It is explained in \cite[Chapter 6]{D} that since $L$ is the Friedrich extension of the operator initially defined on smooth, compactly-supported functions,
for any $f \in H^1(\mu)$,
\begin{equation}  \| f \|_{\dot{H}^{1}(\mu)}^2 = \int_{0}^{\infty} \lambda d \nu_f(\lambda). \label{eq_1829}
\end{equation}
Moreover, $H^1(\mu)$ is the space of all functions in $L^2(\mu)$ for which the integral on the right-hand side of (\ref{eq_1829}) converges.
The next lemma expresses the fact proven in \cite{KP} that the function
$s \mapsto \log \| Q_s g \|_{L^2(\mu_s)}^2$ is convex, and hence it lies above its tangent at zero.
The lemma implies  that $Q_s$ does not reduce the norm of a low-energy function by much.
\begin{lemma} \label{lem_2249}
Let $g \in H^1(\mu)$ satisfy $\int g^2 d \mu =1$.
Denote $E = \int_{\RR^n} |\nabla g|^2 d \mu$.
Then for all $s > 0$,
\begin{equation}  \| Q_s g \|_{L^2(\mu_s)}^2 \geq \exp(-s E).
\label{eq_2249} \end{equation}
\end{lemma}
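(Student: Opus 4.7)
The plan is to exploit the convexity of $s \mapsto \log \|Q_s g\|_{L^2(\mu_s)}^2$, which is precisely the fact attributed to \cite{KP} in the statement, and then simply read off that the inequality \eqref{eq_2249} is the tangent inequality at $s=0$. Let $\Phi(s) = \|Q_s g\|_{L^2(\mu_s)}^2$. Since $Q_0 = \id$ and $\int g^2 d\mu = 1$, we have $\Phi(0) = 1$ and therefore $\log \Phi(0) = 0$. So by convexity it suffices to show that $\Phi'(0) = -E$, because then the standard tangent-line bound for convex functions gives $\log \Phi(s) \geq \log \Phi(0) + s \cdot (\log \Phi)'(0) = -sE$, and exponentiating yields the claim.

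To compute $\Phi'(0)$, the plan is to work with the explicit formula $Q_s g = P_s(g \rho)/P_s \rho$ and rewrite
\[
\Phi(s) = \int_{\RR^n} (Q_s g)^2 \, d\mu_s = \int_{\RR^n} \frac{(P_s(g \rho))^2}{P_s \rho} \, dy.
\]
Differentiating under the integral and using the heat equation $\partial_s P_s h = \tfrac{1}{2}\Delta P_s h$, followed by two integrations by parts, the bulk terms organize into a perfect square and one obtains the clean identity
\[
\Phi'(s) = -\int_{\RR^n} \left| \frac{\nabla P_s(g\rho)}{P_s \rho} - \frac{P_s(g\rho) \nabla P_s \rho}{(P_s\rho)^2} \right|^2 P_s \rho \, dy = -\int_{\RR^n} |\nabla Q_s g|^2 \, d\mu_s.
\]
Specializing at $s=0$ yields $\Phi'(0) = -\int |\nabla g|^2 d\mu = -E$, which is the required slope.

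To legalize the differentiation under the integral and the integrations by parts, I would first reduce, by density of smooth compactly supported functions in $H^1(\mu)$ (cited already from \cite{BK}), to the case where $g$ is smooth and compactly supported. Then $g\rho$ is smooth and rapidly decaying, $P_s(g\rho)$ and $P_s \rho$ are smooth with all derivatives bounded and positive for $s > 0$, and all boundary terms vanish. The approximation argument to pass to general $g \in H^1(\mu)$ is straightforward because both sides of \eqref{eq_2249} are continuous under $H^1(\mu)$-convergence (the right-hand side is trivially so, while for the left one uses $\|Q_s\|_{L^2(\mu) \to L^2(\mu_s)} \leq 1$).

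The only nonroutine ingredient is the convexity of $s \mapsto \log \Phi(s)$, which we import from \cite{KP}; given the identity $\Phi'(s) = -\|\nabla Q_s g\|_{L^2(\mu_s)}^2$, this convexity amounts to a monotonicity statement for the Dirichlet energy along the coupled heat flow. Assuming that result as a black box, the present lemma is just the tangent-line inequality, so no further obstacle appears.
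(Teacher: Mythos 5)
Your proposal is correct and follows essentially the same route as the paper: reduce to compactly supported smooth $g$ by density, invoke the convexity of $s \mapsto \log \|Q_s g\|_{L^2(\mu_s)}^2$ from \cite{KP} (equivalently, the monotonicity of the Rayleigh quotient $R_g(s)$), and conclude by the tangent-line inequality at $s=0$ using $(\log\Phi)'(0) = -E$. The only difference is that you re-derive the identity $\Phi'(s) = -\|\nabla Q_s g\|_{L^2(\mu_s)}^2$ by direct computation, whereas the paper simply cites it from \cite{KP}; your computation is correct.
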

\begin{proof}  Since compactly-supported smooth functions are dense in $H^1(\mu)$,
by \cite[Lemma 2.5]{KP} it suffices to prove (\ref{eq_2249}) for a compactly-supported, smooth function $g$.
According to \cite[Section 2]{KP}, for any $s > 0$, the $s$-derivative of the function
$-\log \| Q_s g \|_{L^2(\mu_s)}^2$ is the Rayleigh quotient
$$ R_g(s) = \| Q_s g \|_{\dot{H}^1(\mu_s)}^2 / \| Q_s g \|_{L^2(\mu_s)}^2, $$
which is continuous and non-increasing
in $s \in [0, \infty)$, with $E = R_g(0)$. Hence,
$$ \log \| Q_s g \|_{L^2(\mu_s)}^2 = -\int_0^s R_g(x) dx \geq -s R_g(0) = -s E, $$
proving (\ref{eq_2249}).
\end{proof}
Lemma \ref{lem_2249} yields a lower bound for $\langle A g, g \rangle_{L^2(\mu)}$ with $A = P_s Q_s = Q_s^* Q_s$ when $g$ is a low energy function. The next lemma shows that if in addition $g$ is an orthogonal projection of a certain function $f$ then we can upgrade this to a lower bound for $ \langle A f, f \rangle_{L^2(\mu)}$.
\begin{lemma}\label{lem_1605}
Let $A$ be a self-adjoint, positive semi-definite operator in $L^2(\mu)$ whose operator norm is at most one.
 Let $f \in L^2(\mu)$ satisfy $\| f \|_{L^2(\mu)} = 1$.
Let $\eps, \beta \in (0,1]$, and assume that $g \in L^2(\mu)$ satisfies $\langle A g, g \rangle_{L^2(\mu)} \geq (1 - \eps) \beta$
and $\beta = \| g \|_{L^2(\mu)}^2 = \langle f, g \rangle_{L^2(\mu)}$.
Then,
$$ \langle A f, f \rangle_{L^2(\mu)} \geq \frac{\beta}{4} - \eps. $$
\end{lemma}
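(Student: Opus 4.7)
The plan is to first show that $g$ is almost an eigenvector of $A$ for the eigenvalue $1$, and then transfer this information to $f$ by Cauchy--Schwarz, using the fact that $f$ has a large inner product with $g$.

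The key observation is the operator inequality $A^2 \le A$, which holds because $0 \le A \le I$ (spectrally, $\lambda^2 \le \lambda$ on $[0,1]$). This yields $\|Ag\|^2 = \langle A^2 g, g\rangle_{L^2(\mu)} \le \langle Ag, g\rangle_{L^2(\mu)} \le \|g\|_{L^2(\mu)}^2 = \beta$, and expanding
\[
\|Ag - g\|_{L^2(\mu)}^2 = \|Ag\|^2 - 2 \langle Ag, g\rangle + \|g\|^2 \le \beta - \langle Ag, g\rangle \le \eps \beta,
\]
so $Ag$ is within $L^2(\mu)$-distance $\sqrt{\eps \beta}$ of $g$. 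Combining this with the hypothesis $\langle f, g\rangle = \beta$ and Cauchy--Schwarz,
\[
\langle Af, g\rangle_{L^2(\mu)} = \langle f, Ag\rangle = \langle f, g\rangle + \langle f, Ag - g\rangle \ge \beta - \sqrt{\eps\beta}.
\]

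Next, setting $B = A^{1/2}$, which is self-adjoint with $\|B\| \le 1$, a second Cauchy--Schwarz gives
\[
\langle Af, f\rangle_{L^2(\mu)} \cdot \langle Ag, g\rangle_{L^2(\mu)} = \|Bf\|^2 \|Bg\|^2 \ge \langle Bf, Bg\rangle^2 = \langle Af, g\rangle^2.
\]
Using $\langle Ag, g\rangle \le \beta$, this reads $\langle Af, f\rangle \ge \langle Af, g\rangle^2 / \beta$ whenever $\langle Af, g\rangle \ge 0$. To finish I split into two cases. If $\beta \ge 4 \eps$, then $\sqrt{\eps \beta} \le \beta/2$, so $\langle Af, g\rangle \ge \beta/2 > 0$, and the bound above yields $\langle Af, f\rangle \ge (\beta/2)^2 / \beta = \beta/4 \ge \beta/4 - \eps$. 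If $\beta < 4 \eps$, then $\beta/4 - \eps < 0$ and the conclusion is trivial from $A \ge 0$. The only substantive step is the operator inequality $A^2 \le A$ that produces the closeness of $Ag$ to $g$; everything else reduces to Cauchy--Schwarz and a short case split.
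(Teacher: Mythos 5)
Your proof is correct, and it takes a genuinely different route from the paper's. The paper works with the spectral decomposition $A=\int_0^1\lambda\,dF_\lambda$: from $\langle Ag,g\rangle\geq(1-\eps)\beta$ it deduces via a Chebyshev-type estimate that $\|F_rg\|\leq\beta/2$ for $r=1-4\eps/\beta$, then shows that the projection $P=\id-F_r$ satisfies $\|Pf\|\geq\sqrt{\beta}/2$ (using $\langle f,g\rangle=\beta$), and concludes $\langle Af,f\rangle\geq r\|Pf\|^2\geq\beta/4-\eps$. You instead exploit the operator inequality $A^2\leq A$ to show that $g$ is an approximate fixed point, $\|Ag-g\|^2\leq\eps\beta$, and then transfer this to $f$ with two applications of Cauchy--Schwarz (one to bound $\langle Af,g\rangle$ from below, one through $A^{1/2}$ to pass from $\langle Af,g\rangle$ to $\langle Af,f\rangle$). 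The underlying mechanism is the same --- $\langle Ag,g\rangle$ being nearly $\|g\|^2$ forces $g$ to sit near the top of the spectrum of $A$ --- but your version avoids the spectral theorem entirely, needing only the existence of $A^{1/2}$, and in the nontrivial case $\beta\geq4\eps$ it even yields the slightly cleaner bound $\langle Af,f\rangle\geq\beta/4$. Two small points worth tightening in the write-up: the caveat ``whenever $\langle Af,g\rangle\geq0$'' is attached to the wrong step (the division by $\langle Ag,g\rangle\leq\beta$ needs $\langle Ag,g\rangle>0$, which holds in the case $\beta\geq4\eps$ since then $\eps\leq1/4$; the sign of $\langle Af,g\rangle$ only matters when you square the lower bound $\beta-\sqrt{\eps\beta}$, and there you do have positivity).
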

\begin{proof} The norm and scalar product in this proof are those of $L^2(\mu)$.
Consider the spectral decomposition
$$ A = \int_0^1 \lambda d F_{\lambda} $$
for a certain increasing family of orthogonal projections $(F_{\lambda})_{\lambda \in [0,1]}$.
For any $0 \leq r < 1$,
$$ (1 - \eps) \beta \leq \langle A g, g \rangle \leq r \| F_r g \|^2 + (\| g \|^2 - \| F_r g \|^2) = \beta + (r-1) \| F_r g \|^2. $$
Setting $r = 1 - 4 \eps / \beta$ we obtain
$$\| F_r g \| \leq \sqrt{ \frac{\eps \beta}{1-r}} = \beta / 2. $$
Consider the orthogonal projection $P = \id - F_r$. Since $\| F_r f \| \leq \| f \| = 1$,
\begin{equation}  \beta = \langle f, g \rangle = \langle F_r f, F_r g \rangle +
\langle Pf, P g \rangle \leq \| F_r g \| + \langle Pf, P g \rangle \leq \frac{\beta}{2} + \langle Pf, P g \rangle.
\label{eq_1554} \end{equation}
Since $\| g \|^2 = \beta$ we have $\| P g \| \leq \| g \| \leq \sqrt{\beta}$. By the Cauchy-Schwartz inequality and (\ref{eq_1554}),
$$ \| P f \| \geq \frac{\langle Pf, Pg \rangle}{\|Pg\|} \geq \frac{\beta}{2 \|Pg\|} \geq \frac{\sqrt{\beta}}{2}. $$
Hence,
$$ \langle A f, f \rangle \geq r \| P f \|^2 \geq \frac{\beta r}{4} =
\frac{\beta}{4} - \eps,  $$
which is the conclusion of the lemma.
\end{proof}
\begin{proof}[Proof of Proposition \ref{prop_1715}]
The norm and scalar product in this proof are those of $L^2(\mu)$, unless stated otherwise.
Fix $\lambda \geq 0$ and set $g = E_\lambda f$. Then
$$ \beta := \| g \|^2 =  \langle f,g\rangle = \langle f, E_\lambda f \rangle = \nu_f ( [0,\lambda] ).  $$
Additionally $E_0 g = E_0 f = 0$. Moreover, since $\nu_g$ is supported on $[0,\lambda]$
\begin{equation}
E := \int_{0}^{\infty} x d \nu_g( x)
\leq \lambda \nu_g( [0,\lambda]) = \lambda \nu_f ( [0,\lambda]) = \lambda \beta.
\label{eq_1037} \end{equation}
Hence $g \in H^1(\mu)$ and $E = \int_{\RR^n} |\nabla g|^2 d \mu$.
By Lemma~\ref{lem_2249} and~\eqref{eq_1037},
\begin{equation}
\langle P_s Q_s g, g \rangle = \| Q_s g \|_{L^2(\mu_s)}^2
\geq \beta \exp(-s E / \beta) \geq \beta \exp( -s\lambda) \geq \beta ( 1 - \eps ) , \label{eq_1621}
\end{equation}
for $\eps = s \lambda$.
The operator $Q_s: L^2(\mu) \rightarrow L^2(\mu_s)$ is a contraction with $Q_s(1) = 1$,
according to \cite{KP}. Hence the operator $A = P_s Q_s = Q_s^* Q_s$ is a positive semi-definite,
self-adjoint operator of norm one in $L^2(\mu)$.
By (\ref{eq_1621}) we have $\langle A g, g \rangle \geq (1 - \eps) \beta$.
Lemma \ref{lem_1605} implies that
\[
\Vert Q_s f \Vert_{L^2(\mu_s)}^2 =
\langle A f, f \rangle \geq \frac{\beta}{4} - \eps
= \frac 14 \cdot \langle E_\lambda f , f \rangle - s\lambda ,
\]
which is the desired inequality.
\end{proof}
\begin{remark}\label{rem_improve}
Proposition \ref{prop_1715} would have been improved upon  had we known that
for any $s > 0$,
\begin{equation}  P_s Q_s \geq e^{s L} . \label{eq_1628}
\end{equation}
Currently we do not have a counterexample to (\ref{eq_1628}). In fact, (\ref{eq_1628}) holds true in a weak sense,
since for any continuous, increasing test function $\vphi: [0,1] \rightarrow [0, \infty)$ that vanishes in a neighborhood of zero,
\begin{equation}  \tr \, \vphi(P_s Q_s) \geq \tr \, \vphi(e^{s L}). \label{eq_1641} \end{equation}
Indeed, inequality (\ref{eq_1641}) follows from some spectral theory, combined with the fact that by Lemma \ref{lem_2249},
for any $f \in {\rm Dom}(L) \subseteq H^1(\mu)$ with $\| f \|_{L^2(\mu)} = 1$,
$$ \langle P_s Q_s f, f \rangle_{L^2(\mu)} = \| Q_s f \|_{L^2(\mu_s)}^2
\geq \exp(-s \| f \|_{\dot{H}^1(\mu)}^2) = \exp(s \langle L f, f \rangle_{L^2(\mu)}).
$$
\end{remark}
\section{Controlling the growth of the covariance matrix}\label{sec_chen}
In this section we prove Lemma~\ref{lem_chen}.
Given $t>0$ we say that an absolutely-continuous probability measure $\mu$ on $\RR^n$ (or a random vector $X$ in $\RR^n$) is \emph{$t$-uniformly log-concave}
if it has a density of the form
\[
x \mapsto \exp \left( - \phi(x) - \frac t2 \vert x\vert^2  \right) ,
\]
for some convex function $\phi: \RR^n \rightarrow \RR \cup \{ + \infty \}$.
In other words $\mu$ is more log-concave than the
Gaussian measure with covariance $\frac 1t \id$.

\begin{lemma}\label{lem_1612}
Suppose that $\mu$ is a centered,
$1$-uniformly log-concave probability measure on $\RR^n$ and let $(B_t)$ be a standard Brownian motion in $\RR^n$ with $B_0 = 0$.
There there exists an adapted process $(Q_t)$ of symmetric matrices such that $\int_0^1 Q_t d B_t$ has law $\mu$,
and such that almost surely,
\begin{equation}\label{eq_Qt}
0 \leq Q_t \leq \id , \quad \forall t\in [0,1].
\end{equation}
\end{lemma}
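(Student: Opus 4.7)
The plan is to build $(Q_t)$ by applying the martingale representation theorem to a contractive transport map from the standard Gaussian to $\mu$. The key input is Caffarelli's contraction theorem: since $\mu$ is $1$-uniformly log-concave, there is a convex function $\Phi : \RR^n \to \RR$ whose gradient $\nabla \Phi$ pushes the standard Gaussian measure $\gamma_1$ forward to $\mu$ and satisfies the pointwise Hessian bound $0 \leq \nabla^2 \Phi \leq \id$. Since $B_1 \sim \gamma_1$, the random vector $Y := \nabla \Phi(B_1)$ has law $\mu$, and $\EE Y = 0$ by the centering assumption on $\mu$.

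Next I would execute the martingale representation. Let $\mathcal{F}_t := \sigma(B_s : s \leq t)$. By the Markov property of Brownian motion, the conditional expectation is given by the Gaussian heat semigroup: $m_t := \EE[Y \mid \mathcal{F}_t] = P_{1-t}(\nabla \Phi)(B_t)$. Since $P_{1-t}$ commutes with differentiation and $P_{1-t}\Phi$ solves the backward heat equation, It\^o's formula collapses to
\begin{equation*}
dm_t = P_{1-t}(\nabla^2 \Phi)(B_t) \, dB_t ,
\end{equation*}
and the natural choice is $Q_t := P_{1-t}(\nabla^2 \Phi)(B_t) = \int_{\RR^n} \nabla^2 \Phi(y) \, \gamma_{1-t}(y - B_t)\, dy$. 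This matrix is symmetric as a Gaussian average of Hessians, and the pointwise sandwich $0 \leq \nabla^2 \Phi \leq \id$ is preserved under the positive smoothing $P_{1-t}$, so $0 \leq Q_t \leq \id$ almost surely. Because $m_0 = \EE Y = 0$ and $m_1 = Y$, integrating yields $\int_0^1 Q_t \, dB_t = Y$, which has law $\mu$.

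The main technical obstacle is the limited regularity of $\Phi$: Caffarelli's theorem only guarantees that $\nabla \Phi$ is globally Lipschitz, so $\nabla^2 \Phi$ exists only in the distributional sense as an $L^\infty$ matrix-valued function. I would handle this either by (i) observing that $P_{1-t}$ turns the Lipschitz vector field $\nabla \Phi$ into a $C^\infty$ vector field for $t<1$, so that $P_{1-t}(\nabla^2\Phi)$ is unambiguously the classical Jacobian of $P_{1-t}(\nabla\Phi)$ and all It\^o manipulations are justified; or (ii) mollifying $\phi$ to a smooth convex function $\phi_\eps$, running the construction for the smoothed measure $\mu_\eps \propto e^{-\phi_\eps - |x|^2/2}$ (where Caffarelli's $C^\infty$ regularity theory makes $\Phi$ genuinely smooth), and passing to the limit using the uniform bounds $0 \leq Q_t^{(\eps)} \leq \id$ to extract a weak limit. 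Either route preserves all the required inequalities and the distributional identity for the limit process.
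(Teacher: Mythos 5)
Your proof is correct and follows essentially the same route as the paper's first argument: both take the Brenier map from the standard Gaussian to $\mu$, invoke Caffarelli's contraction theorem to get $0 \le \nabla^2\Phi \le \id$, apply the heat semigroup $P_{1-t}$ to smooth the map, and use It\^o's formula on the resulting martingale $P_{1-t}(\nabla\Phi)(B_t)$ with $Q_t = \nabla P_{1-t}(\nabla\Phi)(B_t)$. Your explicit treatment of the regularity of $\nabla^2\Phi$ is a welcome expansion of a point the paper glosses over with the phrase ``elementary properties of the heat kernel,'' but the underlying argument is identical; the paper also records an alternative proof via the Eldan--Lehec stochastic-localization representation, which you do not mention.
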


\begin{proof} One possibility is introduce the Brenier map $T: \RR^n \rightarrow \RR^n$ pushing forward the standard Gaussian measure to $\mu$. This map is the gradient of a convex function and it
satisfies that $T(B_1)$ has law $\mu$. See e.g.
Villani's book \cite{villani} for information about the Brenier map. Since $\mu$ is $1$-uniformly log-concave the map $T$ is a contraction, according to the Caffarelli
theorem. By elementary properties of the heat kernel, for any $t > 0$, the smooth map $P_{1-t}(T): \RR^n \rightarrow \RR^n$ is still the gradient of a convex function,
and $$ M_t(x) := \nabla P_{1-t}(T)(x) \in \RR^{n \times n} $$ is a symmetric matrix with $0 \leq M_t(x) \leq \id$ for any $x \in \RR^n$ and any $0 < t < 1$.
Since $(P_{1-t} T)(B_t)$ is a martingale, we have with $Q_t = M_t(B_t)$,
$$ T(B_1) = (P_1 T)(B_0) + \int_0^1 Q_t d B_t = \int_0^1 Q_t d B_t, $$
where $(P_1 T)(0) = \EE T(B_1) = 0$ since $\mu$ is centered. This proves the lemma. Another possibility is to note that it is shown in Eldan and Lehec \cite{EL}
by using stochastic localization
that $\int_0^{\infty} P_t d B_t$ has law $\mu$ for some adapted process $(P_t)$ of symmetric matrices with $0 \leq P_t \leq e^{-t/2}$.
This also implies the required result, via a time change.
\end{proof}
The main step in the proof of  Lemma~\ref{lem_chen} is the following
lemma, whose proof is loosely inspired by the article of Jiang, Lee
and Vempala~\cite{JLV} in which they prove a related inequality,
but with a different set of hypotheses.
\begin{lemma}\label{lem_keyChen}
Let $t>0$, let $\mu$ be a probability measure on $\RR^n$ which is
$t$-uniformly log-concave and centered, and let $X,Y$ be two independent
random vectors with law $\mu$. Then
\[
\EE \langle X,Y\rangle^3 \leq \frac 3t \cdot \EE  \langle X,Y\rangle^2  = \frac 3t \cdot \tr ( A^2 ) ,
\]
where $A$ is the covariance matrix of $\mu$.
\end{lemma}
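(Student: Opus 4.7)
To prove the lemma, I would first reduce to the case $t=1$ by a rescaling argument, since $\sqrt{t}\, X$ is $1$-uniformly log-concave when $X$ is $t$-uniformly log-concave, and both sides of the target inequality transform compatibly ($\EE\langle\sqrt{t} X,\sqrt{t} Y\rangle^3 = t^3 \EE\langle X,Y\rangle^3$ and $\tr(\cov(\sqrt t X))^2 = t^2 \tr(A^2)$). The equality $\EE\langle X,Y\rangle^2 = \tr(A^2)$ is immediate from expanding $\langle X,Y\rangle^2 = \sum_{i,j} X_i X_j Y_i Y_j$ and using independence together with $\EE X_i X_j = A_{ij}$, so the task is the inequality.

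The main idea is to exploit the stochastic representation of Lemma~\ref{lem_1612}, applied to $X$ and $Y$ separately. Write $X = \int_0^1 Q_s\, dB_s$ and $Y = \int_0^1 R_s\, dB'_s$ where $B,B'$ are independent Brownian motions and $(Q_s),(R_s)$ are adapted processes of symmetric matrices with $0\le Q_s,R_s \le \id$. Let $X_s,Y_s$ denote the partial integrals and set $N_s := \langle X_s, Y_s\rangle$; then $(N_s)$ is a martingale in the joint filtration with $N_0=0$, $N_1 = \langle X,Y\rangle$, and quadratic variation $d[N]_s = (|Q_s Y_s|^2+|R_s X_s|^2)\,ds$. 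Noting that $N_s^3 - 3 N_s[N]_s$ is a local martingale starting at $0$ (by It\^o's formula applied to $N^3$), one obtains, under suitable integrability,
\[
\EE\langle X,Y\rangle^3 \;=\; 3\,\EE N_1 [N]_1 \;=\; 3\,\EE \int_0^1 N_s\, d[N]_s,
\]
where the last equality uses the martingale property of $N$ together with the adaptedness of $d[N]_s/ds$. The target inequality thus reduces to $\EE N_1 [N]_1 \le \EE [N]_1$.

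This last inequality is the hard part of the argument. The naive pointwise bound $|N_s|\le 1$ is false --- $|N_s|$ is typically of order $\sqrt{n}$ --- so the proof must extract genuine cancellations. I would exploit the operator inequality $Q_s^2 \le Q_s \le \id$ (which yields $|Q_s Y_s|^2 \le \langle Q_s Y_s, Y_s\rangle$, and similarly for $R_s$), together with the independence of the two Brownian motions $B$ and $B'$. Conditioning on one Brownian motion at a time rewrites the integrand $\EE \langle X_s, Y_s\rangle |R_s X_s|^2$ (and its symmetric counterpart $\EE \langle X_s, Y_s\rangle |Q_s Y_s|^2$) as a contraction of the third-moment tensor of $X_s$ against a tensor built from $R_s^2$ and $Y_s$; the It\^o-derivative identity for the third-moment tensor of $X_s$ --- which expresses it as an integral of symmetrized quantities involving $\EE X_s^i (Q_s^2)_{jk}$ --- then allows one to telescope the time integral. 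Following the covariance inequalities of Jiang-Lee-Vempala~\cite{JLV}, the resulting expression should be bounded by $\tr(A^2)$. The technical subtlety is to correctly track the joint dependence of $X_s$ and $Q_s$ on the Brownian motion $B$, which likely requires a Malliavin-type integration by parts or a direct reorganization using the Itô formula for the moments of $X_s$.
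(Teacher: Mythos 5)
Your setup matches the paper's strategy up to a point: the reduction to $t=1$ by homogeneity, the identity $\EE\langle X,Y\rangle^2=\tr(A^2)$, and the use of It\^o's formula on the third power of the martingale $\langle X_\cdot,Y\rangle$ to extract the factor $3$ are all correct and are exactly how the paper begins. But the proof stops where the real work begins. The inequality you isolate, $\EE\, N_1[N]_1\le\EE\,[N]_1$, \emph{is} the content of the lemma, and your treatment of it (``a contraction of the third-moment tensor \dots\ should be bounded by $\tr(A^2)$ \dots\ likely requires a Malliavin-type integration by parts'') is a description of a hoped-for argument, not an argument. The paper closes this gap with two concrete ingredients that are absent from your sketch. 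First, conditioning on the Brownian motion driving $X$ and using that $Y$ is centered, one may subtract the mean of $|QY|^2$ for free and apply Cauchy--Schwarz to get $\EE[\langle x,Y\rangle\,|QY|^2]\le\langle Ax,x\rangle^{1/2}\,\var(|QY|^2)^{1/2}$. Second, the variance is controlled by the Poincar\'e inequality for $1$-uniformly log-concave measures in the sharpened form of Cordero-Erausquin--Fradelizi--Maurey \cite{CEFM} (constant $\tfrac12$ for functions whose gradient is centered, which holds for $|Qy|^2$), giving $\var(|QY|^2)\le 2\tr(Q^4A)$; then $Q^4\le Q^2$, Cauchy--Schwarz in the time variable, and the identity $\int_0^1\tr(M_tA)\,dt=\tr(A^2)$ with $M_t=\EE\,Q_t^2$ produce exactly the constant $3$. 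Without the CFM refinement the same route only gives $3\sqrt2$, and since the constant here governs the exponent of the logarithm in the main theorem, this step cannot be waved away.

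A secondary structural problem: you represent \emph{both} $X$ and $Y$ as stochastic integrals against independent Brownian motions, so your integrand involves the partial integral $Y_s$ rather than the full vector $Y$. The law of $Y_s$ for $s<1$ is not $\mu$ and is not obviously $1$-uniformly log-concave (it is a pushforward of a Gaussian under a gradient-of-convex contraction, which need not preserve uniform log-concavity), so the Poincar\'e-type bound you would need is not directly available for it. The paper deliberately avoids this by keeping $Y$ static (an independent copy with law $\mu$, independent of the Brownian motion) and applying the martingale representation to $X$ alone, so that the quadratic variation term is $|Q_tY|^2$ with $Y\sim\mu$. I recommend redoing the computation with this asymmetric representation and then supplying the missing variance estimate.
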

\begin{remark}
The value of the constant in Lemma \ref{lem_keyChen} directly controls
the exponent of the logarithm in our main result. The value $3$ is the best we could do, but it
is probably not optimal. For instance, the inequality of Lemma \ref{lem_keyChen} holds true with constant $2$ in dimension one.
\end{remark}

\begin{proof}[Proof of Lemma \ref{lem_keyChen}]
By homogeneity it is enough to prove the statement for $t= 1$ (observe that if
$X$ is $t$-uniformly log concave then $\frac 1 {\sqrt t} \cdot X$
is $1$-uniformly log-concave). We apply Lemma \ref{lem_1612} and let $(X_t)$ be the martingale given by
\[
X_t = \int_0^t Q_s dB_s .
\]
Set $X=X_1$ and let $Y$ be a copy of $X$ that is
independent of the Brownian motion $(B_t)$. In particular
$Y$ is independent of $X$. By It\^o's formula
\begin{equation}\label{eq_stepchen}
\EE \langle X,Y\rangle^3 = 3 \int_0^1 \EE \left[ \langle X_t, Y\rangle \cdot \vert Q_t Y\vert^2 \right]  \, dt  .
\end{equation}
Fix $x\in\RR^n$ and a symmetric matrix $Q$. Recall that $A$ is the covariance matrix of $Y$. Since $Y$ is centered we have
\[
\EE \left[ \langle x,Y\rangle \cdot \vert Q Y\vert^2 \right]
\leq \EE \left[ \langle x , Y \rangle^2 \right]^{1/2} \cdot \var ( \vert Q Y \vert^2  )^{1/2}
= \langle Ax,x\rangle^{1/2} \cdot \var ( \vert Q Y \vert^2  )^{1/2} .
\]
Let us apply the Poincar\'e inequality in order to bound the variance of $\vert QY\vert^2$.
It is well known that a $1$-uniformly log-concave random vector satisfies the Poincar\'e inequality with constant $1$. However, according to
Cordero-Erausquin, Fradelizi and Maurey~\cite[Lemma 2]{CEFM}, under the $1$-uniform log-concavity assumption,
functions whose gradient is centered satisfy the Poincar\'e inequality with constant $\frac 12$ (rather than $1$).
The gradient of $\vert QY\vert^2$ is $2Q^2Y$, which is indeed centered, and we obtain
\[
\var ( \vert QY\vert^2 ) \leq \frac 12 \cdot \EE \vert 2 Q^2 Y \vert^2  = 2 \cdot \tr ( Q^4 A ) .
\]
Plugging this back in~\eqref{eq_stepchen}, using the Cauchy-Schwarz inequality, and pulling the power $1/2$
outside of the integral on $[0,1]$, we get
\begin{equation}\label{eq_stepchen2}
\EE [ \langle X,Y\rangle^3 ] \leq
3\sqrt 2 \cdot \left( \int_0^1 \EE\langle A X_t, X_t\rangle \cdot \EE \tr(Q_t^4 A) \, dt \right)^{1/2}.
\end{equation}
For $t\in[0,1]$ set $M_t=\EE Q_t^2$.
By It\^o's formula the covariance matrix of $X_t$ is $\int_0^t M_s \, ds $. In particular $\int_0^1 M_s \, ds$
is the covariance matrix of $X$, namely $A$. Additionally, by~\eqref{eq_Qt} we have $Q_t^4 \leq Q_t^2$ almost surely,
hence $\EE \tr( Q_t^4 A ) \leq \tr ( M_t A )$, since $A$ is a positive semi-definite matrix. Thus
\begin{align} \nonumber
\int_0^1 \EE\langle A X_t, X_t\rangle \cdot \EE \tr(Q_t^4 A) \, dt
& \leq \int_0^1 \left( \int_0^t \tr(M_s A) \, ds \right) \cdot \tr ( M_t A ) \, dt  \\
& = \frac 12 \left( \int_0^1 \tr(M_t A) \, dt \right)^2 = \frac 12 \tr ( A^2 )^2.
\label{eq_1657} \end{align}
The result follows from ~\eqref{eq_stepchen2} and \eqref{eq_1657}.
\end{proof}
We can now prove the Chen type bound for $q=2$.
\begin{proof}[Proof of Lemma~\ref{lem_chen}]
The matrix $A_t$ satisfies
\[
d A_t = \left\langle  \int_{\RR^n} (x-a_t)^{\otimes 3} p_t (x) \, dx , d B_t \right\rangle - A_t^2 \, dt.
\]
See for instance~\cite{LV} where this computation is explained in detail.
It\^o's formula
 yields
\begin{equation}\label{eq_dat2}
d \tr ( A_t^2 ) = 2 \sum_{i,j=1}^n A_{ij,t} \langle H_{ij,t} , d B_t \rangle  + \sum_{i,j=1}^n \vert H_{ij,t} \vert^2 \, dt - 2 \tr (A_t^3) \, dt ,
\end{equation}
where $A_t = (A_{ij,t})_{i,j=1,\ldots,n} \in \RR^{n \times n}$  and
\[
H_{ij,t} = \int_{\RR^n} (x-a_t)_i (x-a_t)_j (x-a_t) p_t(x) \, dx .
\]
We claim that with probability one
\begin{equation}
\sum_{i,j=1}^n \vert H_{ij,t} \vert^2  \leq \frac{3}{t} \cdot \tr(A_t^2).
\label{eq_1632}
\end{equation}
Indeed, conditioning on $(B_t)$, we
observe that the term on the left-hand side
of~\eqref{eq_1632} is precisely $\EE \langle X,Y\rangle^3$ where $X,Y$ are two independent
random vectors having density $x\mapsto p_t(x+a_t)$. Since the density $p_t$ is $t$-uniformly log-concave, the random
vectors $X,Y$ are $t$-uniformly log-concave, and they are centered by the definition of $a_t$.
Hence (\ref{eq_1632}) follows from Lemma~\ref{lem_keyChen}.
Consequently, by taking expectation in~\eqref{eq_dat2}
we obtain
\[
\frac d{dt} \EE \tr (A_t^2) \leq \frac 3t \cdot \EE \tr(A_t^2) - 2 \cdot \EE \tr (A_t^3) \leq \frac 3t \cdot \EE \tr(A_t^2) .
\]
Integrating this differential inequality yields the conclusion of the lemma.
\end{proof}

\begin{remark} By using the inequality
of Cordero-Erausquin, Fradelizi and Maurey~ \cite{CEFM}
one can improve the exponent in  Chen's lemma \cite[Lemma 8]{chen}
from $2q$ to $q-1$, but still only for $q \geq 3$.
With this improved exponent, it is possible
to adapt our proof of Theorem \ref{thm_2226}
and replace the usage of Lemma \ref{lem_chen}
by the case $q = 3$ of Lemma 8 from \cite{chen}.
\end{remark}
\section{Bounding the covariance matrix for small times}\label{sec_eldan}
In this section we prove Lemma~\ref{lem_1135}.
For a centered, log-concave random vector $X$ in $\RR^n$
we set
\[
\kappa_X^2 = \sup_{\theta\in S^{n-1}} \left\{
 \left\Vert \EE \langle X,\theta\rangle (X\otimes X) \right\Vert_2^2 \right\} ,
\]
and recall that $\kappa_n^2$ is the
supremum of $\kappa^2_X$ over all isotropic log-concave
random vectors $X$
in $\RR^n$. The following lemma is elementary.
\begin{lemma}\label{lem_kappa}
If $X$ is a centered, log-concave random vector in $\RR^n$ then
\[
\kappa_X^2 \leq \Vert \cov(X) \Vert^3_{op} \cdot \kappa_n^2 ,
\]
where $\cov ( X)$ is the covariance matrix of $X$, and the
norm here is the operator norm.
\end{lemma}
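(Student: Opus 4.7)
The plan is to reduce to the isotropic case by applying the standard normalizing affine transformation. Let $\Sigma = \cov(X)$ and assume first that $\Sigma$ is invertible (the degenerate case is handled at the end). Set $T = \Sigma^{1/2}$ and $Y = T^{-1} X$. Then $Y$ is centered, isotropic, and log-concave, since log-concavity is preserved under invertible linear maps. By definition $\|\EE \langle Y,\eta \rangle (Y \otimes Y)\|_2 \leq \kappa_n$ for every $\eta \in S^{n-1}$.

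The key computation is to express the tensor for $X$ in terms of the one for $Y$. Using $X = TY$ with $T$ symmetric, one has
\[
\EE \, \langle X,\theta\rangle (X \otimes X) \;=\; T \cdot \EE \bigl[ \langle Y, T\theta\rangle \, (Y \otimes Y) \bigr] \cdot T.
\]
Writing $T\theta = |T\theta| \cdot \eta'$ with $\eta' \in S^{n-1}$ (the case $T\theta = 0$ is trivial), the inner expectation has Hilbert--Schmidt norm at most $|T\theta| \cdot \kappa_n$. It remains to transfer this bound through the two factors of $T$, which is done via the elementary inequality $\|TMT\|_2 \leq \|T\|_{op}^2 \, \|M\|_2$, coming from $\|AB\|_2 \leq \|A\|_{op} \|B\|_2$. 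Since $|T\theta| \leq \|T\|_{op} = \|\Sigma\|_{op}^{1/2}$ for $|\theta|=1$, one obtains
\[
\bigl\| \EE \, \langle X,\theta\rangle (X \otimes X) \bigr\|_2 \;\leq\; \|T\|_{op}^3 \cdot \kappa_n \;=\; \|\Sigma\|_{op}^{3/2} \cdot \kappa_n,
\]
and squaring and taking the supremum over $\theta$ yields the lemma.

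For the degenerate case where $\Sigma$ is not invertible, $X$ is supported on a proper subspace, and one can either apply the invertible argument on that subspace (using monotonicity of the constants $\kappa_k$ in the dimension, or simply the embedding of a lower-dimensional isotropic log-concave random vector into $\RR^n$ by appending independent Gaussian coordinates), or else perturb by $X_\eps = X + \eps G$ with $G$ standard Gaussian independent of $X$, apply the invertible case to $X_\eps$, and pass to the limit as $\eps \to 0$.

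The main obstacle, as such, is minimal: the argument is a routine reduction. The one point requiring a bit of care is the Hilbert--Schmidt estimate $\|TMT\|_2 \leq \|T\|_{op}^2 \|M\|_2$, which should be stated explicitly rather than taken for granted; the exponent $3$ in the lemma matches the fact that two factors of $\|T\|_{op}$ come from conjugation by $T$ and one factor comes from the norm of $T\theta$.
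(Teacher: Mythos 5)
Your proof is correct, but it follows a genuinely different route from the paper. You isotropize $X$ by the linear change of variables $Y = \Sigma^{-1/2}X$ and then track how the third-moment tensor transforms: the identity $\EE\langle X,\theta\rangle(X\otimes X) = T\,\EE[\langle Y,T\theta\rangle(Y\otimes Y)]\,T$ with $T=\Sigma^{1/2}$ is right, and the three factors of $\Vert T\Vert_{op}$ (two from conjugation via $\Vert TMT\Vert_2 \leq \Vert T\Vert_{op}^2\Vert M\Vert_2$, one from $|T\theta|$) account exactly for the exponent $3$. The paper instead isotropizes by \emph{adding} independent noise: after using scalar homogeneity of degree $6$ to normalize $\Vert\cov(X)\Vert_{op}=1$, it takes a symmetric log-concave $Y$ (e.g.\ Gaussian) independent of $X$ with covariance $\id-\cov(X)$, observes that $\EE[(X+Y)^{\otimes 3}]=\EE[X^{\otimes 3}]$ because all cross terms vanish by symmetry and independence, and concludes $\kappa_X^2=\kappa_{X+Y}^2\leq\kappa_n^2$ since $X+Y$ is isotropic and log-concave. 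The paper's argument buys a completely computation-free proof that also handles a degenerate covariance with no separate case (a degenerate Gaussian $Y$ is still log-concave and the convolution is still log-concave); your argument is more hands-on but equally elementary, and your treatment of the degenerate case (restriction to the support subspace together with $\kappa_k\leq\kappa_n$ via appending independent Gaussian coordinates, or the perturbation $X+\eps G$ and a limit) is sound, though it does require the extra paragraph that the paper's trick avoids.
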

\begin{proof}
Both terms of the inequality are homogeneous of degree $6$
in $X$ so we can assume that $\cov(X)$ has operator norm $1$. Therefore $\cov ( X ) \leq \id$, and in particular $\id-\cov(X)$
is a positive semi-definite matrix. Let $Y$ be a log-concave
random vector independent of $X$, having covariance $\id - \cov(X)$, such that $Y$ coincides in law with $-Y$, i.e. $Y$ is symmetric. For instance $Y$ could be a centered Gaussian random vector with the appropriate
covariance matrix. Under these conditions
\[
\EE [ (X+Y)^{\otimes 3}  ] = \EE [ X^{\otimes 3} ] .
\]
Indeed, all of the other terms in the expansion of the
tensor $(X+Y)^{\otimes 3}$ have zero expectation.
Thus $\kappa^2_X = \kappa^2_{X+Y}$ and the latter quantity is at most $\kappa_n^2$, since $X+Y$ is log-concave and isotropic.
\end{proof}
Recall that $\mu$ is an isotropic, log-concave probability
measure in $\RR^n$ with density $p_0$,
and that $(p_t)$ is the associated stochastic localization process. As above we denote by $(a_t)$ and $(A_t)$
the corresponding processes of barycenters and covariance matrices, respectively.
\begin{lemma}\label{prop_machin}
For every $t\leq (C \kappa_n^2 \cdot \log n)^{-1}$
we have
\[
\PP ( \Vert A_t\Vert_{op} \geq 2 ) \leq \exp ( - (Ct)^{-1} ),
\]
where $C > 0$ is a universal constant.
\end{lemma}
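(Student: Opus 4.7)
The plan is a stopping-time argument built on matrix It\^o calculus for $\tr(A_s^q)$, in the spirit of Eldan's original analysis. Set $\tau = \inf\{s\geq 0 : \Vert A_s\Vert_{op} \geq 2\}$ so that $\PP(\Vert A_t\Vert_{op}\geq 2) \leq \PP(\tau\leq t)$. On $\{s<\tau\}$ the tilted density $p_s$ is log-concave with barycenter $a_s$ and covariance $A_s$ of operator norm at most $2$; writing $\tilde X = X-a_s$ for $X\sim p_s$, Lemma~\ref{lem_kappa} gives the self-bounding estimate
\[
\kappa_{\tilde X}^2 \leq \Vert A_s\Vert_{op}^3\kappa_n^2 \leq 8\kappa_n^2 .
\]
This is the crucial input, as it quantitatively controls the third-moment tensor $H_{ij,s} = \EE[\tilde X_i\tilde X_j\tilde X] \in \RR^n$ that drives the matrix SDE $dA_s = \langle H_s,dB_s\rangle - A_s^2\,ds$ before the stopping time.

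A direct union bound over an $\eps$-net of $S^{n-1}$ would cost a prohibitive factor $e^{Cn}$, so the plan is instead to analyze $\tr(A_s^q)$ for a large integer $q$ to be optimized. The drift $-q\tr(A_s^{q+1})\,ds \leq 0$ is discarded; using the Hessian identity
\[
\partial_{ij}\partial_{kl}\tr(A^q) = q\sum_{p=0}^{q-2}(A^p)_{jk}(A^{q-2-p})_{li}
\]
together with the covariation $d\langle A_{ij},A_{kl}\rangle_s = \langle H_{ij,s},H_{kl,s}\rangle\,ds$, a direct index contraction identifies the It\^o correction as
\[
\frac{q}{2}\sum_{p=0}^{q-2} \EE_{X,Y}\!\left[ \langle\tilde X,\tilde Y\rangle\,\langle\tilde X, A_s^p\tilde Y\rangle\,\langle\tilde X, A_s^{q-2-p}\tilde Y\rangle\right]\,ds,
\]
where $X,Y$ are independent copies drawn from $p_s$ and $\tilde Y = Y-a_s$. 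Combining Cauchy--Schwarz in $\tilde X$ with the $\kappa_n$-bound from Lemma~\ref{lem_kappa} applied direction-by-direction in $\tilde Y$, each summand is controlled on $\{s<\tau\}$ by $C\kappa_n^2\tr(A_s^q)$. Summing the $q-1$ terms and invoking Gr\"onwall then yields
\[
\EE\tr(A_{t\wedge\tau}^q) \leq n\exp(Cq^2\kappa_n^2\,t).
\]

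Finally, since $\Vert A\Vert_{op}^q \leq \tr(A^q)$, Markov's inequality gives $\PP(\tau\leq t)\leq 2^{-q}n\exp(Cq^2\kappa_n^2 t)$. Optimizing in $q$ with $q^*\sim 1/(\kappa_n^2 t)$ (which is comfortably $\geq 1$ in the range of interest) produces $\PP(\tau\leq t)\leq n\exp(-c/(\kappa_n^2 t))$; under the hypothesis $t\leq (C\kappa_n^2\log n)^{-1}$ with $C$ sufficiently large, the prefactor $n$ is absorbed into the exponential to yield the bound $\exp(-1/(Ct))$. The main obstacle is the It\^o-correction estimate itself: controlling each triple-product expectation by $C\kappa_n^2\tr(A_s^q)$ requires a careful spectral/Cauchy--Schwarz bookkeeping that parallels the proof of Lemma~\ref{lem_chen}, with the dimension-free factor $\kappa_n^2$ from Lemma~\ref{lem_kappa} now playing the role that the $s$-uniform log-concavity factor $1/s$ played in Lemma~\ref{lem_keyChen}. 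The self-bounding power $\Vert A_s\Vert_{op}^3$ produced by Lemma~\ref{lem_kappa} is harmless precisely because the stopping time $\tau$ freezes it at a constant.
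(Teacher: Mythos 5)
Your overall architecture (the stopping time $\tau$, a trace-power potential for $A_t$, control of the third-moment tensor via Lemma~\ref{lem_kappa}, Markov's inequality) is reasonable, but there are two genuine gaps, and the second one means this route cannot deliver the lemma as stated.

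First, the pivotal estimate ``each summand of the It\^o correction is controlled on $\{s<\tau\}$ by $C\kappa_n^2\tr(A_s^q)$'' is false in general. In the eigenbasis of $A_s$ the correction equals $\frac q2\sum_{p}\sum_{i,j}\lambda_i^{p}\lambda_j^{q-2-p}\vert u_{ij}\vert^2$ with $u_{ij}=\EE[\tilde X_i\tilde X_j\tilde X]$, and what Lemma~\ref{lem_kappa} gives (via $\sum_j\vert u_{ij}\vert^2\leq\Vert A\Vert_{op}^3\kappa_n^2$, or the sharper row-wise version $\sum_j\vert u_{ij}\vert^2\leq\lambda_i\Vert A\Vert_{op}^2\kappa_n^2$ obtained by writing $\tilde X=A^{1/2}Z$) is a bound of the form $Cq^2\kappa_n^2\Vert A\Vert_{op}^2\tr(A^{q-1})$. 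One cannot bound $\tr(A^{q-1})$ by $C\tr(A^q)$: small eigenvalues ruin it, already for $q=2$ where the comparison is $\tr A$ versus $\tr(A^2)$. This is fixable --- use $\tr(A^{q-1})\leq n^{1/q}\tr(A^q)^{1-1/q}$ and run a nonlinear Gr\"onwall argument on $(\EE\tr A_{s\wedge\tau}^q)^{1/q}$ to recover $\EE\tr(A^q_{t\wedge\tau})\leq n(1+Cq\kappa_n^2t)^q$ --- but the step as you wrote it does not stand.

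Second, and more seriously: even granting the moment bound, optimizing $2^{-q}\,n\exp(Cq^2\kappa_n^2t)$ over $q$ forces $q\lesssim(\kappa_n^2t)^{-1}$ and yields $\PP(\tau\leq t)\leq\exp(-c/(\kappa_n^2t))$ after absorbing the factor $n$. This is \emph{not} $\exp(-(Ct)^{-1})$ with a universal $C$; it is weaker by the factor $\kappa_n^2$ in the exponent, and $\kappa_n$ is not known a priori to be bounded by a constant. The paper's proof avoids exactly this loss by using the soft-max potential $\Phi_t=\beta^{-1}\log\tr(e^{\beta A_t})$ with $\beta=2\log n$ and, crucially, by separating roles: $\kappa_n^2$ enters only the \emph{drift} of $\Phi_t$, whose total contribution up to time $t\leq(32\kappa_n^2\log n)^{-1}$ is deterministically at most $1/4$ before $\tau$, while the tail probability comes from the \emph{martingale} part, whose quadratic variation is at most $Ct$ with a universal constant because the diagonal terms satisfy $\vert u_{ii}\vert\leq C\Vert A_t\Vert_{op}^{3/2}$ by reverse H\"older alone, with no $\kappa_n$. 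That separation is what produces the $\kappa_n$-free rate $\exp(-c/t)$. Your weaker tail would in fact still suffice for Corollary~\ref{cor_truc} and the main theorem after re-tuning constants (the constant in the time restriction would then depend on $p$), but as a proof of Lemma~\ref{prop_machin} as stated the argument does not close.
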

\begin{proof} Recall that the matrix process  $(A_t)_{t \geq 0}$ satisfies
\[
d A_t = \left\langle  \int_{\RR^n} (x-a_t)^{\otimes 3} p_t (x) \, dx , d B_t \right\rangle - A_t^2 \, dt.
\]
We will apply It\^o's formula to a smooth approximation of
$\Vert A_t \Vert_{op}$. Let $\beta > 0$ be a parameter to be determined soon. Set
\[
\Phi_t = \frac 1\beta \log \tr \left(  e^{\beta A_t}  \right)  ,
\]
and note that
\[
\Vert A_t \Vert_{op} \leq \Phi_t \leq \Vert A_t\Vert_{op} + \frac{ \log n } \beta .
\]
Write  $0 \leq \lambda_{1,t} \leq \dotsb \leq \lambda_{n,t}$
for the eigenvalues of $A_t$, repeated according to
their multiplicity, and let $e_{1,t},\ldots,e_{n,t} \in \RR^n$ be a corresponding
orthonormal basis of eigenvectors. For easing  the computation of $d \Phi_t$, we first consider  the case where
the eigenvalues $\lambda_{1,t} \leq \dotsb \leq \lambda_{n,t}$
are almost surely distinct for all positive time. Then It\^o's formula gives (see e.g. \cite{Eldan2})
\[
d \lambda_{i,t} =  \langle u_{ii,t} , d B_t  \rangle - \lambda_{i,t}^2 \, dt
+ \sum_{j\colon j\neq i} \frac{ \vert u_{ij,t} \vert^2 }{\lambda_{i,t}-\lambda_{j,t} } \, dt ,
\]
where
\[
u_{ij,t} = \int_{\RR^n} \langle x-a_t , e_{i,t}\rangle\langle x-a_t, e_{j,t} \rangle  (x-a_t) p_t (x) \, dx.
\]
(The choice of the orthonormal basis of eigenvectors, which are determined only up to a sign, does not affect
the above expression for the It\^o derivative of $\lambda_{i,t}$).
Next we apply the It\^o formula  to the smooth function
\[
f(\lambda_1,\ldots,\lambda_n) = \frac 1\beta \log \left( \sum_{i=1}^n e^{\beta \lambda_i} \right)
\]
and obtain
\begin{align}
\nonumber
d \Phi_t & = \sum_i \alpha_i \langle u_{ii} , d B_t \rangle - \sum_i \alpha_i \lambda_i^2  dt + \sum_{i\neq j}
\frac{ \alpha_i \vert u_{ij} \vert^2 }{\lambda_{i}-\lambda_{j} } \, dt \\
& + \frac \beta 2 \sum_{i} \alpha_i \vert u_{ii} \vert^2 \, dt
- \frac \beta 2 \left\vert \sum_{i} \alpha_i u_{ii}\right\vert^2 \, dt  ,
\label{eq_step_phi}
\end{align}
where we dropped the dependence in $t$ to lighten notations and where
\[
\alpha_i = \frac{ e^{\beta\lambda_i} } { \sum_{j=1}^n e^{\beta \lambda_j} } .
\]
Symmetrizing the third term of~\eqref{eq_step_phi},
namely replacing $\alpha_i$ by $\frac12 (\alpha_i - \alpha_j)$, we
observe that the expression for $d \Phi_t$ still makes sense by continuity when the eigenvalues
are not necessarily distinct. This suggests that this
expression for $d\Phi_t$ remains valid if we drop this assumption.
As a matter of fact, applying It\^o's formula directly to the
function $A\mapsto \frac 1\beta \log \tr ( e^{\beta A}  )$ does lead to the
same expression for $d\Phi_t$.
Using the inequality
\[
\frac{ e^x - e^y }{ x -y } \leq \frac { e^x + e^y } 2
\]
 to upper bound the third term of~\eqref{eq_step_phi}, and dropping the
non-positive terms, we arrive at the inequality
$$
d \Phi_t \leq  \sum_i \alpha_i  \langle u_{ii} , d B_t \rangle
+ \frac \beta2 \sum_{i,j} \alpha_i \vert u_{ij} \vert^2 dt.
$$
By the Cauchy-Schwartz inequality and
the  reverse H\"older inequalities
for log-concave measures (e.g. \cite[Theorem 2.4.6]{BGVV}),
\begin{align*}  |u_{ii}| & \leq \sup_{\theta \in S^{n-1}} \int_{\RR^n}
\langle x - a_t, e_i \rangle^2 |\langle x - a_t, \theta \rangle| p_t(x) dx \\ & \leq \left( \int_{\RR^n}
\langle x - a_t, e_i \rangle^4 p_t(x) dx \right)^{1/2} \cdot \| A_t \|_{op}^{1/2}
\leq C \| A_t \|_{op}^{3/2}, \end{align*}
for some universal constant $C > 0$. According to  Lemma~\ref{lem_kappa}, for any fixed $i$,
\[
\sum_{j} \vert u_{ij} \vert^2 \leq \Vert A_t\Vert_{op}^3 \cdot \kappa_n^2.
\]
Since the $\alpha_i$'s are positive and add up to $1$, we finally obtain
\[
d \Phi_t = \langle v_t , d B_t \rangle + c_t \, dt ,
\]
where
\begin{equation}\label{eq_vt}
\vert v_t \vert^2 \leq C \cdot \Vert A_t \Vert_{op}^3  \quad \text{and} \quad
c_t \leq \frac 12 \beta \cdot \Vert A_t\Vert_{op}^3 \cdot \kappa_n^2 .
\end{equation}
Let $\tau$ be the following stopping time:
\[
\tau = \inf \{ t\geq 0 \, ; \,  \Vert A_t \Vert_{op} \geq 2 \}.
\]
Choose $\beta = 2 \log n$ and suppose that $t\leq (32 \cdot \kappa_n^2 \cdot \log n)^{-1}$.
Note that $\Phi_0 = 3/2$ since $\mu$ is isotropic.
Because of~\eqref{eq_vt} and by the definition of $\tau$
we have
\[
\Phi_{t\wedge \tau} \leq \Phi_0 + M_{t } + 8 t \cdot \kappa_n^2 \cdot \log n \leq \frac 32 + M_{t} + \frac 14 ,
\]
where $t \wedge \tau = \min \{ t, \tau \}$ and $(M_t)$ is the martingale
\[
M_t = \int_0^{t\wedge\tau} \langle v_s , dB_s\rangle .
\]
If $\Vert A_t\Vert_{op} \geq 2$ then $\tau\leq t$, $\Vert A_{t\wedge\tau} \Vert_{op} \geq 2$
and also $\Phi_{t\wedge\tau} \geq 2$.
In view of the preceding inequality this implies that
\[
\PP( \Vert A_t\Vert_{op} \geq 2 ) \leq \PP \left( M_t \geq \frac 14 \right).
\]
The martingale $(M_s)$ satisfies $M_0 = 0$ and by~\eqref{eq_vt}  its quadratic variation at time $t$ satisfies
\[
[M]_t = \int_0^{t\wedge \tau} \vert v_s\vert^2 \, ds \leq C' t , \quad \text{almost surely}.
\]
The martingale lemma spelled out below thus implies
\[
\PP \left( M_t \geq \frac 14 \right) \leq e^{-(C''t)^{-1}}  ,
\]
which concludes the proof.
\end{proof}
The following deviation inequality for
martingales with bounded quadratic variation is folklore.
We provide a short proof for completeness.
\begin{lemma}
Let $(M_t)_{t \geq 0}$ be a continuous martingale
satisfying $M_0 = 0$ and $[M]_t \leq \sigma^2$ almost surely for some fixed time $t>0$ and some constant $\sigma > 0$. Then
\[
\PP( M_t \geq u ) \leq e^{ - u^2 / 2 \sigma^2 } , \quad \forall u \geq 0 .
\]
\end{lemma}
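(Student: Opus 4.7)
The plan is to use the standard exponential martingale technique (a Bernstein-type Chernoff bound for continuous martingales). For each $\lambda > 0$, define the stochastic exponential
\[
Z^\lambda_t := \exp\!\left( \lambda M_t - \tfrac{\lambda^2}{2} [M]_t \right).
\]
By It\^o's formula, $Z^\lambda$ is a nonnegative local martingale, and therefore a supermartingale. In particular $\EE Z^\lambda_t \leq Z^\lambda_0 = 1$.

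Next I would exploit the almost-sure bound on the quadratic variation. Since $[M]_t \leq \sigma^2$ a.s., we have pointwise
\[
e^{\lambda M_t} = Z^\lambda_t \cdot \exp\!\left( \tfrac{\lambda^2}{2} [M]_t \right) \leq Z^\lambda_t \cdot e^{\lambda^2 \sigma^2 / 2},
\]
so taking expectations gives $\EE e^{\lambda M_t} \leq e^{\lambda^2 \sigma^2/2}$. Markov's inequality then yields, for any $u \geq 0$,
\[
\PP( M_t \geq u ) \leq e^{-\lambda u} \, \EE e^{\lambda M_t} \leq \exp\!\left( -\lambda u + \tfrac{\lambda^2 \sigma^2}{2} \right).
\]
Optimizing in $\lambda > 0$ by choosing $\lambda = u/\sigma^2$ gives the claimed bound $\exp(-u^2/(2\sigma^2))$.

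The only mildly delicate point is the supermartingale property of $Z^\lambda$; since this is a nonnegative local martingale, Fatou's lemma applied to a localizing sequence $(\tau_k)$ of stopping times gives $\EE Z^\lambda_t \leq \liminf_k \EE Z^\lambda_{t \wedge \tau_k} = 1$, so no integrability assumption on $M$ beyond the quadratic variation bound is needed. Alternatively, one could observe that Novikov's condition $\EE \exp(\tfrac12 \lambda^2 [M]_t) \leq e^{\lambda^2 \sigma^2/2} < \infty$ is automatic here, upgrading $Z^\lambda$ to a true martingale, but the supermartingale version suffices for the one-sided deviation bound.
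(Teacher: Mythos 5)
Your proof is correct and follows exactly the same route as the paper's: the exponential local martingale $\exp(\lambda M_t - \tfrac{\lambda^2}{2}[M]_t)$ is a supermartingale by Fatou, the quadratic variation bound gives $\EE e^{\lambda M_t} \leq e^{\lambda^2\sigma^2/2}$, and Markov plus optimization in $\lambda$ finishes. No gaps; the remark on the localizing sequence is a fine (if standard) elaboration of the paper's one-line appeal to Fatou's lemma.
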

\begin{proof} Let $\lambda >0$. By It\^o's formula the process $(D_s)$ given by
\[
D_s = \exp\left( \lambda M_s - \frac{\lambda^2} 2 [M]_s \right)
\]
is a positive local martingale, hence a super-martingale by Fatou's lemma.
In particular $\EE [ D_t ] \leq \EE [ D_0 ] = 1$. In view of the hypothesis, this yields
\[
\EE [ \exp( \lambda M_t ) ] \leq \exp ( \lambda^2 \sigma^2 / 2 ) .
\]
Now apply the Markov inequality and optimize in $\lambda$.
\end{proof}
\begin{corollary} \label{cor_truc}
Let $t\leq (C \kappa_n^2 \cdot \log n)^{-1}$ and let $p\geq 1$. Then
\[
\EE [\Vert A_t \Vert_{op}^p ] \leq C_p ,
\]
where $C > 0$ is a universal constant and $C_p > 0$ is a constant depending only on $p$.
\end{corollary}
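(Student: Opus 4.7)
The plan is to complement the tail bound of Lemma~\ref{prop_machin} with a simple deterministic pointwise bound on $\Vert A_t\Vert_{op}$ coming from uniform log-concavity.

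First I would observe that the random density $p_t$ is automatically $t$-uniformly log-concave. Indeed, $p_t(x)\propto e^{\langle \theta_t, x\rangle - t|x|^2/2} \rho(x)$, so $-\log p_t$ has Hessian at least $t\,\mathrm{Id}$ pointwise. Applying the Brascamp--Lieb inequality to linear functionals (equivalently, the Poincar\'e inequality with constant $1/t$ valid for $t$-uniformly log-concave measures) to the covariance matrix $A_t$ of $p_t$ yields
\[
\Vert A_t \Vert_{op} \;=\; \sup_{\theta \in S^{n-1}} \int_{\RR^n} \langle x-a_t, \theta\rangle^2 \, p_t(x)\,dx \;\leq\; \frac 1 t
\]
with probability one.

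Next I would use the layer-cake formula
\[
\EE \Vert A_t\Vert_{op}^p \;=\; \int_0^\infty p\,u^{p-1}\,\PP(\Vert A_t\Vert_{op} \geq u)\,du
\]
and split at $u = 2$. The integral over $[0,2]$ trivially contributes at most $2^p$. On $[2, 1/t]$, monotonicity of the tail together with Lemma~\ref{prop_machin} gives $\PP(\Vert A_t\Vert_{op}\geq u)\leq \exp(-(Ct)^{-1})$, while on $(1/t, \infty)$ the probability vanishes by the deterministic bound just established. Hence
\[
\int_2^\infty p\,u^{p-1}\,\PP(\Vert A_t\Vert_{op}\geq u)\,du \;\leq\; t^{-p}\exp\bigl(-(Ct)^{-1}\bigr).
\]

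Writing $s = 1/t$, the right-hand side is $s^p e^{-s/C}$. Under the hypothesis $t \leq (C\kappa_n^2 \log n)^{-1}$, and since $\kappa_n \geq 1$ and $\log n \geq 1$ as we may assume $n$ is sufficiently large, we have $s = 1/t \geq C\kappa_n^2 \log n$ bounded below, and in any case $s \mapsto s^p e^{-s/C}$ is uniformly bounded on $(0,\infty)$ by a constant depending only on $p$ and on the universal constant $C$. Combining the two pieces gives $\EE \Vert A_t\Vert_{op}^p \leq C_p$, as desired. There is no substantial obstacle here: the key point is that a deterministic cap on $\Vert A_t\Vert_{op}$ of order $1/t$ is essentially free from the structure of stochastic localization, and combined with the very small tail probability at the threshold $2$ it suffices to control every polynomial moment.
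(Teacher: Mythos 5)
Your proof is correct and follows essentially the same route as the paper: the deterministic bound $\Vert A_t\Vert_{op}\leq 1/t$ from $t$-uniform log-concavity, combined with the tail estimate of Lemma~\ref{prop_machin} at the threshold $2$, yielding $\EE\Vert A_t\Vert_{op}^p\leq 2^p+t^{-p}e^{-1/(Ct)}$ and then a uniform bound on $s^p e^{-s/C}$. The layer-cake formulation is a cosmetic variation on the paper's direct split of the expectation over the event $\{\Vert A_t\Vert_{op}\geq 2\}$.
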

\begin{proof}
Since $A_t$ is the covariance matrix of a measure which is
more log-concave than the Gaussian measure with covariance $\frac 1t \id$
we have $\Vert A_t \Vert_{op} \leq \frac 1t$, almost surely. Applying Lemma~\ref{prop_machin}
we thus get for $t\leq (C \kappa_n^2 \cdot \log n)^{-1}$,
\[
\EE[ \Vert A_t\Vert_{op}^p ] \leq 2^p + \frac 1{t^p} \PP ( \Vert A_t\Vert_{op} \geq 2 )
\leq 2^p + \frac 1{t^p} e^{- 1/(Ct) } \leq 2^p + C^p p! ,
\]
and the corollary is proven.
\end{proof}
Clearly $\Vert A_t \Vert_p^p \leq n \Vert A_t\Vert_{op}^p$, and hence  Corollary~\ref{cor_truc} yields
the desired Lemma~\ref{lem_1135}.
\begin{remark}
Corollary~\ref{cor_truc} for $p=1$ recovers Eldan's theorem.
Indeed it implies that setting $t_1 :=(C \kappa_n^2\cdot \log n )^{-1}$, we have
\[
\EE \left[ \int_0^{t_1} \Vert A_t\Vert_{op} \, dt \right] \leq C' t_1 \leq \hat C \cdot \frac1{\log n} = o(1) .
\]
This is well known to imply $\psi_\mu^2 \leq \tilde C \cdot t_1^{-1} = C_1 \kappa_n^2 \cdot \log n$,
see for instance~\cite[page 9]{k_chen}. We thus get
\[
\psi_n^2 \leq  C_1 \kappa_n^2 \cdot \log n ,
\]
which is (the hard part of) Eldan's inequality~\eqref{eq_eldanbis}.
\end{remark}

\section{Proof of the main result}
\label{sec_proof}
In this section we prove Theorem \ref{thm_2226}.
Let $\mu$ be an isotropic, log-concave probability measure in $\RR^n$ with
a smooth positive density. Let us  furthermore assume that
\begin{equation} \sigma_\mu > \sigma_n / 2. \label{eq_1139} \end{equation}
Thus the thin-shell parameter of $\mu$ is nearly as large as possible. The requirement (\ref{eq_1139})
is consistent with the assumption that $\mu$ has a smooth, positive density. Indeed by convolving $\mu$
with a tiny Gaussian and normalizing back to isotropicity, we obtain a smooth positive density,
and the change in $\sigma_{\mu}$ can be made arbitrarily small.
For $f \in L^2(\mu)$ with $\int f d \mu = 0$
write
\begin{equation}  \| f \|_{H^{-1}(\mu)} = \sup \left \{ \int_{\RR^n} f u d \mu \, ; \, u \in L^2(\mu) \textrm{ is locally-Lipschitz with } \int_{\RR^n} |\nabla u|^2 d \mu \leq 1 \right \}. \label{eq_1900} \end{equation}
Recall that for $f \in L^2(\mu)$ we write 
$\nu_f$ for the spectral measure of $f$ relative to the Laplace operator
associated with $\mu$. According to (\ref{eq_1851}), setting 
$$ \lambda_1 = \frac{1}{C_P(\mu)} $$
we have that $\nu_f([0, \lambda_1)) = 0$ for 
any $f \in L^2(\mu)$ with $\int f d \mu = 0$ (i.e., $f$ is centered).
From (\ref{eq_1829})  and (\ref{eq_1900}) we can deduce that when $f \in L^2(\mu)$ is centered, 
\begin{equation}
\| f \|_{H^{-1}(\mu)}^2 = \int_{0}^{\infty} \frac{d \nu_f(\lambda)}{\lambda}
= \int_{\lambda_1}^{\infty} \frac{d \nu_f(\lambda)}{\lambda}.
\label{eq_1802} \end{equation}
The following inequality was proven in the case of the uniform measure on a convex body in \cite{ptrf} and for a general log-concave measure
in Barthe and Klartag \cite{BK}. According to Proposition 10 in \cite{BK}, for any smooth function $f: \RR^n \rightarrow \RR^n$ with $f, \nabla f \in L^2(\mu)$ such that $\int_{\RR^n} \partial^i f d \mu = 0$ for all $i$,
\begin{equation}
\var_{\mu}(f) \leq \sum_{i=1}^n \| \partial^i f \|_{H^{-1}(\mu)}^2. \label{eq_1856}
\end{equation}
Specializing to the case where $f(x) = |x|^2$ in (\ref{eq_1856}),
which is the main case used already in \cite{ptrf}, we obtain
$$  n \sigma_{\mu}^2 = \var_{\mu}(|x|^2)
\leq 4  \sum_{i=1}^n \| x_i \|_{H^{-1}(\mu)}^2. $$
Since $\mu$ is centered, we may use ~\eqref{eq_1802} and rewrite this as
\begin{equation}\label{eq_sigmaF}
\sigma_\mu^2 \leq \frac{4}{n} \sum_{i=1}^n \int_{\lambda_1}^\infty \frac{d \nu_{x_i}(\lambda)}{\lambda}
= 4 \int_{\lambda_1}^{\infty} \frac{F(\lambda)}{\lambda^2} d \lambda,
\end{equation}
where the last equality follows by integration by parts, and 
\[
F(\lambda ): = \frac 1n \sum_{i=1}^n \nu_{x_i} ( [0,\lambda] )
\in [0,1]
\]
is the average spectral mass of the coordinate functions below level $\lambda$. 
We are now in a position to prove our main result.
\begin{proof}[Proof of Theorem~\ref{thm_2226}]
Applying Proposition~\ref{prop_1715} with the coordinate function $x_i$ and
summing over $i$ yields
\begin{equation}\label{eq_Flambda2}
F(\lambda ) \leq C \left(  \frac 1n \cdot \Vert Q_s x\Vert^2_{L^2 ( \mu_s ) } + \lambda s \right) ,
\end{equation}
for every positive $\lambda$ and $s$.
Let us translate this to the normalization $t = 1/s$ of Eldan's stochastic localization.
By (\ref{eq_1044}) we can rewrite~\eqref{eq_Flambda2} as
\begin{equation}\label{eq_Flambda}
F(\lambda) \leq C  \left(  \frac 1n \cdot \EE \vert a_t \vert^2  + \frac \lambda t \right).
\end{equation}
According to Corollary~\ref{lem_at}, for all $t>0$ we have
\begin{equation}\label{eq_steptruc}
\EE \vert a_t\vert^2 \leq C_1 t \cdot \max \left\{ 1,  (t/t_1)^3  \right\} \cdot n ,
\end{equation}
where $t_1 = (C\kappa_n^2\cdot \log n )^{-1}$. From 
(\ref{eq_536}), (\ref{eq_1210}) and Eldan's Theorem 
in the form of inequality (\ref{eq_eldanbis}) above,
\begin{equation} \label{eq_1008}
 t_1 = \frac { c}{\kappa_n^2 \log n} \leq \frac {c'} {\psi_n^2} \leq \frac {\tilde{c}} { C_P(\mu) } =  \tilde{c} \cdot \lambda_1.
\end{equation}
Given $\lambda \geq \lambda_1$,  combining ~\eqref{eq_Flambda} and ~\eqref{eq_steptruc} and choosing $t = \lambda^{1/5} \cdot t_1^{3/5} \geq c \cdot t_1$
yields
\begin{equation}\label{eq_12345}
F(\lambda) \leq C t^4 \cdot  t_1^{-3} +  C' \lambda \cdot t^{-1} \leq \tilde{C} \lambda^{4/5} \cdot t_1^{-3/5} .
\end{equation}
From ~\eqref{eq_sigmaF}, (\ref{eq_1008}) and (\ref{eq_12345}), 
\begin{equation}
\sigma_\mu^2 \leq 4 \int_{\lambda_1}^{\infty} \frac{F(\lambda)}{\lambda^2} d \lambda \leq C t_1^{-3/5} \cdot \int_{\lambda_1}^\infty \lambda^{-6/5} \, d\lambda
= C' t_1^{-3/5} \cdot \lambda_1^{-1/5} \leq C' t_1^{-4/5} .
\label{eq_1009} \end{equation}
Now recall the definition of $t_1$, the fact that $\mu$ has a nearly maximal thin-shell constant and
(the easy part of) Eldan's inequality~\eqref{eq_eldanbis}. We thus obtain from (\ref{eq_1139}), (\ref{eq_1008}) and (\ref{eq_1009}) that
\[
\sigma_n^2 \leq C \sigma_\mu^2 \leq C' \left( \kappa_n^2 \log n \right)^{4/5}
\leq \tilde{C} \left( \sigma_n^2 \log^2 n \right)^{4/5} .
\]
Therefore $\sigma_n \leq C (\log n)^4$ and thus $\psi_n \leq C' (\log n)^5$ by one last
application of Eldan's theorem.
\end{proof}
\begin{remark}
The exponent of the logarithmic factor in Theorem \ref{thm_2226} does not seem optimal.
As we already mentioned, one way to decrease it could be to improve the
constant in Lemma~\ref{lem_chen}. Another option is to either lower the gap between $\kappa_n$
and $\sigma_n$ or to replace $\kappa_n$ in Lemma~\ref{lem_eldan} by something smaller,
maybe an averaged version of $\kappa_n$. A matrix-valued version of Lemma \ref{lem_chen} could be useful too.
Lastly, a stronger
version of Proposition~\ref{prop_1715} could hold true, see Remark~\ref{rem_improve}, and
this  would improve our main result.
\end{remark}
\begin{remark} From~\eqref{eq_1802} one can obtain the following alternative expression
for the $H^{-1}$-norm of a centered function $f \in L^2(\mu)$:
\[
\Vert f \Vert_{H^{-1}(\mu)}^2
 = \int_0^\infty \langle e^{sL} f, f  \rangle_{L^2(\mu)} \, ds .
\]
This shows that the
$H^{-1}$-inequality~\eqref{eq_1856} can
be reformulated as follows
\begin{equation}\label{eq_H1again}
\var_\mu (f) \leq \int_0^\infty \langle e^{sL} \nabla f , \nabla f \rangle_{L^2(\mu)} \, ds
= 2  \int_0^\infty \Vert e^{sL} \nabla f \Vert_{L^2(\mu)}^2  \, ds.
\end{equation}
Here the semi-group $e^{sL}$ is applied coordinate-wise to the vector field
$\nabla f$, and the second equality follows from the change of variable $s \to 2s$ and
the fact that $e^{sL}$ is self-adjoint in $L^2 ( \mu )$. This should be
compared with the expression one gets by differentiating the variance of
$f$ along the semi-group $e^{sL}$, namely
\begin{equation}\label{eq_BEvariance}
\var_\mu ( f ) = 2 \int_0^\infty \Vert \nabla (e^{sL} f) \Vert^2_{L^2 (\mu) } \, ds ,
\end{equation}
see for instance~\cite[section 4.2]{BGL}.
Moreover, in the log-concave case,
the Bakry-\'Emery machinery yields the commutation rule
\[
\vert \nabla (e^{sL} f) \vert^2 \leq e^{sL} \vert \nabla f \vert^2 ,
\]
pointwise (in both space and time), see~\cite[Theorem 3.2.3]{BGL}.
However, it does {\it not} imply that
$$
\vert \nabla (e^{sL} f) \vert^2 \leq \vert e^{sL} \nabla f \vert^2
$$
and we suspect that this inequality is not true in general. Nevertheless, after integrating 
in both space and time, this becomes a valid inequality. Indeed, the $H^{-1}$-inequality~\eqref{eq_H1again} and the equality~\eqref{eq_BEvariance} show that when $\mu$ is log-concave, for any $f \in L^2(\mu)$,
\begin{equation}  \int_0^\infty \Vert \nabla (e^{sL} f) \Vert^2_{L^2 (\mu) } \, ds 
\leq \int_0^\infty \Vert e^{sL} \nabla f \Vert_{L^2(\mu)}^2  \, ds. \label{eq_1902} \end{equation}
\end{remark}
%

\end{document}